\theoremstyle{plain}
\newtheorem{theorem}{Theorem}[section]
\newtheorem{cor}{Corollary}[theorem]
\newtheorem{lemma}{Lemma}[section]
\newtheorem{conj}[theorem]{Conjecture}
\theoremstyle{definition}
\newtheorem{definition}{Definition}[section]
\newtheorem{remark}{Remark}[section]
\newtheorem{example}{Example}[theorem]
\begin{document}

\title[Some remarks on orthogonality of operators]{Some remarks on orthogonality of  bounded linear operators}
\author[Ray, Sain, Dey and Paul ]{Anubhab Ray, Debmalya Sain, Subhrajit Dey and Kallol Paul}

\newcommand{\acr}{\newline\indent}

\address[Ray]{Department of Mathematics\\ Jadavpur University\\ Kolkata 700032\\ West Bengal\\ INDIA}
\email{anubhab.jumath@gmail.com}

\address[Sain]{Department of Mathematics\\ Indian Institute of Science\\ Bengaluru 560012\\ Karnataka \\India\\ }
\email{saindebmalya@gmail.com}

\address[Dey]{Department of Mathematics\\ Muralidhar Girls' College \\ Kolkata 700029 \\ West Bengal\\ INDIA}
\email{subhrajitdeyjumath@gmail.com}

\address[Paul]{Department of Mathematics\\ Jadavpur University\\ Kolkata 700032\\ West Bengal\\ INDIA}
\email{kalloldada@gmail.com}

\thanks{Mr. Anubhab Ray would like to thank DST, Govt. of India, for the financial support in the form of doctoral fellowship. Dr. Debmalya Sain would like to thank UGC, Govt. of India for D. S. Kothari Post-doctoral Fellowship. Dr. Sain is indebted to Professor Gadadhar Misra for his motivating inspirations. The research of Prof. Kallol Paul is partially supported by RUSA 2.0, JU.}

\subjclass[2010]{Primary 46B20, Secondary 47L05}
\keywords{Orthogonality, Linear operators, Norm attainment, Polyhedral Banach spaces}

\begin{abstract}
We explore the relation between the orthogonality of bounded linear operators in the space of  operators and that of elements in the ground space.  To be precise, we study  if $ T, A   \in \mathbb{L}(\mathbb{X}, \mathbb{Y}) $  satisfy $ T \bot_B A,$ then whether there exists $ x \in \mathbb{X} $   such that $  Tx\bot_B Ax$   with $ \|x\| =1, \|Tx\| = \|T\|$, where $\mathbb{X}, \mathbb{Y} $ are normed linear spaces.  
  In this context, we introduce the notion of Property $ P_n $ for a Banach space and illustrate its connection with orthogonality of a bounded linear operator between Banach spaces. We further study Property $ P_n $ for various polyhedral Banach spaces.

\end{abstract}

\maketitle

\section{Introduction}

The purpose of the present article is to continue the study of orthogonality properties of bounded linear operators between Banach spaces, in light of the seminal result obtained by Bhatia and \v{S}emrl \cite{BS} regarding orthogonality of linear operators on Euclidean spaces. Let us first establish the relevant notations and the terminologies in this context.\\

Letters $\mathbb{X}$ and $\mathbb{Y}$ denote Banach spaces. Throughout the article, we work only with real Banach spaces. Let $B_{\mathbb{X}}=\{x\in \mathbb{X}: \|x\|\leq 1\}$ and $S_{\mathbb{X}}=\{x\in \mathbb{X}:\|x\|=1\}$ denote the unit ball and the unit sphere of $\mathbb{X}$ respectively. Let $ E_{\mathbb{X}} $ denotes the set of all extreme points  of $B_{\mathbb{X}}. $ For a set $ \mathcal{S} \subset \mathbb{X}, $ $|\mathcal{S}|$ denotes the cardinality of $ \mathcal{S}. $ Let $\mathbb{L}(\mathbb{X},\mathbb{Y})$ denote the Banach space of all bounded linear operators from $\mathbb{X}$ to $\mathbb{Y},$ endowed with the usual operator norm. We write $ \mathbb{L}(\mathbb{X},\mathbb{Y}) = \mathbb{L}(\mathbb{X}), $ if $ \mathbb{X}= \mathbb{Y}. $ For a bounded linear operator $T\in \mathbb{L}(\mathbb{X},\mathbb{Y}), $ let $ M_T $ denote the norm attainment set of $ T, $ i.e., $M_T=\{x\in S_\mathbb{X}:\|Tx\|=\|T\|\}.$ The notion of Birkhoff-James orthogonality in a Banach space is well-known and is used extensively in the study of the geometry of Banach spaces. For $ x,y \in \mathbb{X}, $ $ x $ is said to be orthogonal to $ y $ in the sense of Birkhoff-James \cite{J}, written as $ x \bot_{B} y, $ if $ \|x+ \lambda y\| \geq \|x\| $ for all $ \lambda \in \mathbb{R}. $ Similarly, for $ T, A \in \mathbb{L}(\mathbb{X},\mathbb{Y}), $ $ T $ is said to be Birkhoff-James orthogonal to $ A, $ written as $ T \bot_{B} A, $ if $ \|T+ \lambda A\| \geq \|T\| $ for all $ \lambda \in \mathbb{R}. $ For the $ n$-dimensional Euclidean space $ \mathbb{E}^{n}, $ Bhatia and \v{S}emrl \cite{BS} proved that for $ T, A \in \mathbb{L}(\mathbb{E}^{n}), $ $ T \bot_{B} A $ if and only if there exists $ x \in S_{\mathbb{E}^{n}} $ such that $ \|Tx\|= \|T\| $ and $ Tx \bot_{B} Ax. $ We refer the readers to \cite{P,PHD} for another approach in this context. In recent times, various generalizations of this remarkable theorem has been obtained \cite{Sa,SP} in the setting of Banach spaces. We note that the sufficient part of the above theorem is true whenever the domain space and the co-domain space are normed linear spaces of any dimensions, and not just Euclidean spaces. On the other hand, the necessary part of the said theorem is not true in general Banach spaces, even  in the finite-dimensional case \cite{LS, SP, SPH}. In view of this, the authors \cite{SPH} defined the Bhatia-\v{S}emrl (B\v{S}) Property of a bounded linear operator $ T \in \mathbb{L}(\mathbb{X}). $ It is possible to extend the definition of the B\v{S} Property in a broader context, by not imposing the restriction that the domain space and the co-domain space must be identical. Therefore, let us first make note of the following definition of the B\v{S} Property in its full generality.

\begin{definition}
Let $\mathbb{X}, \mathbb{Y} $ be Banach spaces and let $ T \in \mathbb{L}(\mathbb{X},\mathbb{Y}). $ We say that $ T $ satisfies the Bhatia-\v{S}emrl (B\v{S}) Property if for any $ A \in \mathbb{L}(\mathbb{X},\mathbb{Y}), $ $ T \bot_{B} A $ implies that there exists $ x \in M_T $ such that $ Tx \bot_{B} Ax. $
\end{definition}

Our aim is to study the B\v{S} Property of bounded linear operators between Banach spaces, especially when the domain space and the co-domain space are polyhedral. $ \mathbb{X} $ is said to be a polyhedral Banach space if $ B_{\mathbb{X}} $ has only finitely many extreme points. Equivalently, $ \mathbb{X} $ is a polyhedral Banach space if $ B_{\mathbb{X}} $ is a polyhedron. In this context, let us mention the following formal definitions:

\begin{definition}
A polyhedron $P$ is a non-empty compact subset of $\mathbb{X}$ which is the intersection of finitely many closed half-spaces of $\mathbb{X}$, i.e., $P=\bigcap_{i=1}^r M_i,$ where $ M_i $ are closed half-spaces in $ \mathbb{X} $ and $r \in \mathbb{N}.$ The dimension of the polyhedron $P$, written as $ \dim P, $ is defined as the dimension of the subspace generated by the differences $ v-w $ of vectors $ v,w \in P. $
\end{definition}

\begin{definition}
A polyhedron $ Q $ is said to be a face of the polyhedron $ P $ if either $ Q=P $ or if we can write $ Q = P \cap \delta M, $ where $ M $ is a closed half-space in $ \mathbb{X} $ containing $ P $ and $ \delta M $ denotes the boundary of $  M. $ If $ \dim Q = i, $ then $  Q $ is called an $i$-face of $ P. $ $ (n-1)$-faces of $ P $ are called facets of $ P $ and $1$-faces of $ P $ are called edges of $ P. $
\end{definition}

\begin{definition} \cite{SPBB}
Let $ \mathbb{X} $ be a finite-dimensional polyhedral Banach space. Let $ F $ be a facet of the unit ball $ B_{\mathbb{X}} $ of $  \mathbb{X}. $ A functional $ f \in S_{\mathbb{X}^{*}} $ is said to be a \emph{supporting functional corresponding to the facet $ F $ of the unit ball $ B_{\mathbb{X}} $} if the following two conditions are satisfied:\\
$ (1) $ $ f $ attains norm at some point $ v $ of $ F, $\\
$ (2) $ $ F =(v + \ker f)\cap S_{\mathbb{X}}. $
\end{definition}
We also make use of the concept of normal cones in a Banach space in our study.
\begin{definition}
A subset $ K $ of $ \mathbb{X} $ is said to be a normal cone in $ \mathbb{X} $ if \\
$ (i)~ K + K \subset K, (ii)~ \alpha K \subset K $ for all $ \alpha \geq 0, $ and $ (iii)~ K \cap (-K) = \{\theta\}. $
\end{definition}
Normal cones are important in the study of the geometry of Banach spaces, because there is a natural partial ordering $ \geq $ associated with a normal cone $ K. $ Namely, for any two elements $ x, y \in \mathbb{X},~ x \geq y $ if $ x-y \in K. $ It is easy to observe that in a two-dimensional Banach space $ \mathbb{X}, $ any normal cone $ K $ is completely determined by the intersection of $ K $ with the unit sphere $ S_{\mathbb{X}}. $  Keeping this in mind, when we say that $ K $ is a normal cone in $ \mathbb{X}, $ determined by $ v_1, v_2, $ what we really mean is that $ K \cap S_{\mathbb{X}} = \left\{\frac{(1-t)v_1 + tv_2}{\| (1-t)v_1 + tv_2 \|} : t \in[0,1]\right \}. $ Of course, in this case $ K = \{\alpha v_1 + \beta v_2 : \alpha, \beta \geq 0\}. $

A complete characterization of linear operators on a two-dimensional real Banach space satisfying the B\v{S} Property has been obtained in \cite{SPH} by proving the following theorem:
\begin{theorem}
A linear operator $ T $ on a two-dimensional real Banach space $ \mathbb{X} $ satisfies the B\v{S} Property if and only if the set of unit vectors on which $ T $ attains norm is connected in the corresponding projective space $ \mathbb{R}P^1 = S_{\mathbb{X}} / \{x \sim -x\}. $  
\end{theorem}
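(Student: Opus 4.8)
The plan is to reduce the B\v{S} Property to a connectedness argument on $M_T$, using a known characterization of operator orthogonality in terms of one-sided Birkhoff--James orthogonality of images. Throughout, write $z \bot_B^{+} w$ (resp. $z \bot_B^{-} w$) to mean $\|z + \lambda w\| \ge \|z\|$ for all $\lambda \ge 0$ (resp. all $\lambda \le 0$), so that $z \bot_B w$ iff both hold. The starting point is the characterization (see \cite{Sa}) that for $T, A \in \mathbb{L}(\mathbb{X})$ with $\mathbb{X}$ finite-dimensional, $T \bot_B A$ holds if and only if there exist $x_1, x_2 \in M_T$ with $Tx_1 \bot_B^{+} Ax_1$ and $Tx_2 \bot_B^{-} Ax_2$. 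For $x \in S_{\mathbb{X}}$, let $J(Tx) = \{f \in S_{\mathbb{X}^{*}} : f(Tx) = \|Tx\|\}$ be the set of supporting functionals at $Tx$, and set $g_{+}(x) = \max\{f(Ax) : f \in J(Tx)\}$ and $g_{-}(x) = \min\{f(Ax) : f \in J(Tx)\}$. Since the right and left derivatives of $\lambda \mapsto \|Tx + \lambda Ax\|$ at $0$ equal $g_{+}(x)$ and $g_{-}(x)$ respectively, one gets $Tx \bot_B^{+} Ax \iff g_{+}(x) \ge 0$, $Tx \bot_B^{-} Ax \iff g_{-}(x) \le 0$, and hence $Tx \bot_B Ax \iff g_{-}(x) \le 0 \le g_{+}(x)$. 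Note that $g_{-} \le g_{+}$, that both descend to well-defined functions on $\mathbb{R}P^1$ (since $T(-x) = -Tx$ and $A(-x) = -Ax$ leave $f(Ax)$ unchanged after replacing $f$ by $-f$), and that, $J$ being an upper semicontinuous compact-valued map on the compact sphere, $g_{+}$ is upper semicontinuous and $g_{-}$ is lower semicontinuous.

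For sufficiency I would assume the image $\tilde M_T \subset \mathbb{R}P^1$ of $M_T$ is connected and take $A$ with $T \bot_B A$. By the characterization above there are $[x_1], [x_2] \in \tilde M_T$ with $g_{+}(x_1) \ge 0$ and $g_{-}(x_2) \le 0$. Consider the relatively open sets $U_{+} = \{[x] \in \tilde M_T : g_{+}(x) < 0\}$ and $U_{-} = \{[x] \in \tilde M_T : g_{-}(x) > 0\}$, which are open by semicontinuity of $g_{\pm}$ and disjoint because $g_{-} \le g_{+}$. If no $x \in M_T$ satisfied $Tx \bot_B Ax$, every $[x] \in \tilde M_T$ would lie in $U_{+} \cup U_{-}$, exhibiting $\tilde M_T = U_{+} \sqcup U_{-}$ as a partition into two disjoint open sets; connectedness would force one to be empty, but $[x_1] \notin U_{+}$ forbids $\tilde M_T = U_{+}$ and $[x_2] \notin U_{-}$ forbids $\tilde M_T = U_{-}$, a contradiction. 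Hence some $z \in M_T$ satisfies $Tz \bot_B Az$, so $T$ has the B\v{S} Property.

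For necessity I would argue the contrapositive: assuming $\tilde M_T$ is disconnected, I would produce $A$ with $T \bot_B A$ for which $g_{-}(x) \le 0 \le g_{+}(x)$ fails at every $x \in M_T$. Writing $\tilde M_T$ as a disjoint union of two nonempty relatively clopen symmetric pieces separated by a ``gap'' (an open arc of $\mathbb{R}P^1$ on which $T$ does not attain its norm), the idea is to design $A$ so that $g_{+} > 0$ strictly throughout one piece while $g_{-} < 0$ strictly throughout the other; the two pieces then supply the witnesses $x_1, x_2$ of the one-sided characterization, yielding $T \bot_B A$, yet full orthogonality holds nowhere. Concretely, $A$ would be specified through the images of a basis, tuned on the directions meeting each piece via the supporting functionals there, with the slack afforded by the gap ensuring the strict inequalities persist.

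The main obstacle is precisely this explicit construction of $A$ in the necessity direction: one must control the values $g_{\pm}(x)$ simultaneously over two separated pieces of $M_T$ while maintaining $\|T + \lambda A\| \ge \|T\|$ for all $\lambda$, and must verify that the norm attainment set of $T$ is not enlarged in a way that reintroduces a common orthogonality point. The sufficiency direction, by contrast, reduces cleanly to the connectedness argument above once the semicontinuity of $g_{\pm}$ and the one-sided characterization of \cite{Sa} are in hand.
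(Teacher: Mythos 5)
Your sufficiency argument is correct. The paper itself does not prove the quoted theorem --- it is cited from \cite{SPH}, with the sufficiency half attributed to Theorem $2.1$ of \cite{SP} and the necessity half being Theorem $2.3$ of \cite{SPH}, which this paper restates (in generalized form, without proof) as Theorem \ref{th:BS-property}. Relative to those known proofs, your sufficiency half is essentially the standard argument: the one-sided characterization of \cite{Sa} (whose use is legitimate and non-circular, since its proof is an independent convexity argument), the identification of $g_{+}, g_{-}$ with the one-sided derivatives of the norm, their semi-continuity, and the two-open-sets connectedness argument all check out; as the paper notes, this direction in fact holds in every finite dimension.

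The necessity direction, however, contains a genuine gap, and it is the substantive half of the theorem. First, a concrete error in your sketch: arranging $g_{+}>0$ strictly on one piece of the projective image of $M_T$ and $g_{-}<0$ strictly on the other does \emph{not} make ``full orthogonality hold nowhere,'' since $g_{-}(x)\le 0\le g_{+}(x)$ is perfectly compatible with $g_{+}(x)>0$ or with $g_{-}(x)<0$. What your scheme actually needs is the stronger pair of conditions $g_{-}>0$ throughout one piece and $g_{+}<0$ throughout the other; these do exclude orthogonality at every point of $M_T$, and they still supply the two one-sided witnesses because $g_{+}\ge g_{-}$. Second, and more seriously, the construction of $A$ that you defer as ``the main obstacle'' is not a verification to be tuned later --- it is precisely the content of Theorem $2.3$ of \cite{SPH} (Theorem \ref{th:BS-property} here), i.e.\ the entire theorem in this direction. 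Nor is it routine: the natural rank-one choice $Ax=f(x)z$ pushes every $Ax$ into a single direction $z$, and defeating orthogonality then requires $z\notin\bigcup_{x\in M_T}(Tx)^{\bot}$, a condition that can fail because this union may be all of the codomain. That obstruction is exactly what the present paper's Property $P_n$ is designed to track: it is why Theorem \ref{th:BS-n-dim} must be stated as a dichotomy between alternatives $(i)$ and $(ii)$, and Theorem \ref{th:P_n-property} together with Corollary \ref{cor:l_infity} shows the failure is real (e.g.\ $\ell_{\infty}^{2}$ fails Property $P_2$). Moreover, the analogous necessity statement in dimension at least three is the open conjecture recorded in the paper's introduction, which underscores that this half carries all of the dimension-specific geometric content. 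As it stands, your proposal is a complete proof of sufficiency only; the necessity half remains unproven.
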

Equivalently, we can say in the two-dimensional case that $ T \in \mathbb{L}(\mathbb{X}) $ satisfies the B\v{S} Property if and only if $M_T = D \cup (-D), $ where $ D $ is a closed connected subset of $ S_{\mathbb{X}}. $ In this context, we refer the readers to the following conjecture \cite{SPH}, which is yet to be proved or disproved, when the dimension of the space is greater than two. 
\begin{conj}
	A linear operator $T$ on an $ n $-dimensional real  Banach space $\mathbb{X}$ satisfies the B\v{S} Property if and only if the set of unit vectors on which T attains norm is connected in the corresponding projective space $\mathbb{R}P^{n-1} = S_{\mathbb{X}}/{ \{x \sim -x \}}. $
\end{conj}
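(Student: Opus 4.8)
The plan is to establish the two implications separately, taking as the central tool the description of operator orthogonality through \emph{one-sided} (directed) Birkhoff--James orthogonality at norm-attaining points. For $ u,v $ in a Banach space, write $ u \perp_B^{+} v $ if $ \|u + \lambda v\| \geq \|u\| $ for all $ \lambda \geq 0, $ and $ u \perp_B^{-} v $ if the same holds for all $ \lambda \leq 0, $ so that $ u \perp_B v $ exactly when $ u \perp_B^{+} v $ and $ u \perp_B^{-} v. $ When the domain is finite-dimensional $ M_T $ is nonempty and compact, and the known refinement of the Bhatia--\v{S}emrl theorem (see \cite{Sa,SP}) asserts that $ T \perp_B A $ if and only if there exist $ x_1, x_2 \in M_T $ with $ Tx_1 \perp_B^{+} Ax_1 $ and $ Tx_2 \perp_B^{-} Ax_2. $ To make this usable I would introduce the functions $ h_{+}(x) = \max_{f \in J(Tx)} f(Ax) $ and $ h_{-}(x) = \min_{f \in J(Tx)} f(Ax), $ where $ J(u) $ denotes the compact convex set of norming functionals of $ u. $ Then $ Tx \perp_B^{+} Ax \iff h_{+}(x) \geq 0, $ $ Tx \perp_B^{-} Ax \iff h_{-}(x) \leq 0, $ and $ Tx \perp_B Ax \iff h_{-}(x) \leq 0 \leq h_{+}(x); $ moreover, since $ x \mapsto J(Tx) $ is upper semicontinuous with compact values, $ h_{+} $ is upper semicontinuous and $ h_{-} $ is lower semicontinuous on $ M_T. $

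For the sufficiency direction (connectedness $ \Rightarrow $ B\v{S} Property), assume $ M_T = D \cup (-D) $ with $ D $ a closed connected subset of $ S_{\mathbb{X}}, $ and let $ T \perp_B A. $ Picking $ x_1,x_2 $ as above and passing to an antipode where needed --- which, by linearity of $ T $ and $ A, $ leaves $ h_{+} $ and $ h_{-} $ unchanged and hence preserves both one-sided relations --- I may assume $ x_1, x_2 \in D. $ Along a path $ \gamma\colon [0,1] \to D $ from $ x_1 $ to $ x_2, $ set $ t_0 = \inf\{\, t : h_{-}(\gamma(t)) \leq 0 \,\}. $ Lower semicontinuity makes $ \{\, h_{-} \leq 0 \,\} $ closed, so $ h_{-}(\gamma(t_0)) \leq 0; $ and since $ h_{+} \geq h_{-} > 0 $ for $ t < t_0, $ upper semicontinuity of $ h_{+} $ yields $ h_{+}(\gamma(t_0)) \geq 0 $ (the degenerate case $ t_0 = 0 $ being covered directly by $ h_{+}(x_1) \geq 0 $). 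Hence $ x_0 = \gamma(t_0) $ gives $ Tx_0 \perp_B Ax_0, $ which is the B\v{S} Property. The only soft point is the passage from connectedness to path-connectedness; in the polyhedral setting this is automatic, because $ x \mapsto \|Tx\| $ is piecewise affine and so $ M_T $ is a finite union of compact convex cells whose connected components are path-connected.

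The necessity direction is the genuine difficulty, and I would attack its contrapositive: assuming the projective image of $ M_T $ is disconnected into distinct closed pieces $ C_1 $ and $ C_2, $ I must build an operator $ A $ for which $ T \perp_B A $ yet $ Tx \not\perp_B Ax $ for every $ x \in M_T. $ In terms of the functions above this means forcing $ h_{-} > 0 $ strictly over $ C_1 $ and $ h_{+} < 0 $ strictly over $ C_2 $ (so that $ 0 $ never lies in $ [h_{-}(x), h_{+}(x)] $), while retaining one point with $ h_{+} \geq 0 $ and one with $ h_{-} \leq 0 $ so that $ T \perp_B A $ persists. The obstruction is the existence of a single \emph{global linear} operator $ A $ realizing these strictly sign-separated one-sided values on a norm-attainment set that may be geometrically complicated: one is effectively solving a Hahn--Banach--type interpolation problem made delicate by the set-valuedness of $ J(Tx) $ and by the demand that the prescribed data extend consistently to all of $ \mathbb{X}. $ My strategy would be to lean on the polyhedral hypothesis, writing $ M_T $ as a finite union of polyhedral cells, assigning to the relevant facets their supporting functionals in the sense defined above, and using the attached normal cones to prescribe $ A $ cell by cell; the finiteness of the facial structure is what should make the separation constructible. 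Controlling the prescribed values where separated cells meet --- and extending beyond the polyhedral case --- is exactly where I expect the argument to stall, which accords with the statement's status as an open conjecture for $ \dim \mathbb{X} > 2. $
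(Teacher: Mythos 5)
The statement you are addressing is not a theorem of this paper but a conjecture which the paper explicitly records as ``yet to be proved or disproved'' for $\dim\mathbb{X}>2$; there is no proof in the paper to compare yours against, and your proposal does not close the open direction (nor does it claim to). What you do prove is the sufficiency implication, which is the half already known from \cite{SP}. Your mechanism --- the one-sided characterization of $T\perp_B A$ from \cite{Sa} together with the envelopes $h_{\pm}(x)$ over the norming functionals $J(Tx)$ --- is sound, but the reduction to path-connectedness is an unnecessary restriction: a closed connected subset of $S_{\mathbb{X}}$ need not be path-connected, and the conjecture concerns arbitrary $n$-dimensional real Banach spaces, not only polyhedral ones, so your piecewise-affine escape hatch does not cover the stated generality. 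The fix is cheap and removes the path entirely: if no $x\in M_T$ satisfies $Tx\perp_B Ax$, then $M_T=\{h_->0\}\cup\{h_+<0\}$, and these sets are disjoint, relatively open (by the semicontinuity you already established), symmetric under $x\mapsto -x$, and each nonempty (containing $x_1$ and $x_2$ respectively); this disconnects $D$, a contradiction.

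The genuine gap is the necessity direction, and you correctly identify it as the point where the argument stalls. It is worth noting that your proposed construction --- prescribe $A$ on a basis so that the one-sided signs are strictly separated over the two projective components of $M_T$ --- is essentially what the paper carries out in Theorem \ref{th:BS-n-dim} (setting $Ax_1=z$, $Ax_2=-z$, $Ax_i=\beta_i z$), and what that argument actually delivers is only a disjunction: either $\bigcup_{x\in M_T}(Tx)^{\bot}=\mathbb{Y}$ or $T$ fails the B\v{S} Property, and even that only under the non-vanishing hypotheses on the coordinates of every $w\in M_T$. The first alternative is the real obstruction to your plan: when the sets $(Tx)^{\bot}$, $x\in M_T$, cover the codomain, no single direction $z$ exists with $z\notin(Tx)^{\bot}$ for all $x\in M_T$, and the paper shows via Theorems \ref{th:prism} and \ref{th:prism-pyramid} that such coverings by as few as two orthogonality sets do occur in concrete polyhedral spaces; Property $P_n$ is introduced precisely to quarantine this failure. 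So your sketch, if completed along the lines you indicate, would reproduce the paper's conditional results rather than prove the conjecture. An honest write-up should present the sufficiency half as a proof (with the disconnection argument above) and the necessity half as a reduction to the covering problem for $\bigcup_{x\in M_T}(Tx)^{\bot}$, explicitly flagged as open.
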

 However, the sufficient part of the conjecture was proved in \cite{SP}. Indeed, if $\mathbb{X}$ is a finite-dimensional real Banach space and $ T \in \mathbb{L}(\mathbb{X}) $ is such that $M_T = D \cup (-D)$, where $ D $ is a closed connected subset of $ S_{\mathbb{X}}, $  then $ T $ satisfies the B\v{S} Property. We are interested in exploring the converse to the above result. To be precise, if $ \dim{\mathbb{X}} > 2 $ and $M_T \neq D \cup (-D), $ where $ D $ is a closed connected subset of $ S_{\mathbb{X}}, $ then whether $ T \in \mathbb{L}(\mathbb{X}, \mathbb{Y}) $ satisfies the B\v{S} Property, where $ \mathbb{Y} $ is any Banach space. With this motivation in mind, we introduce the following definition for a Banach space $ \mathbb{X},$ which plays a significant role in our study.

\begin{definition}
Let $ \mathbb{X} $ be a Banach space. Given $ n \in \mathbb{N}, $ we say that $ \mathbb{X} $ has Property $ P_n $ if for every choice of $ n $ vectors $ x_1, x_2, \ldots, x_n \in S_{\mathbb{X}}, $ $ \bigcup \limits_{i=1}^{n} x_{i}^{\bot} \subsetneqq \mathbb{X}. $ 
\end{definition}

It is clear from the above definition that if $ \mathbb{X} $ has Property $ P_n $ then $ \mathbb{X} $ has Property $ P_m $ for all $ m \in \mathbb{N}, $ with $ m \leq n. $ We illustrate the connection between Property $ P_n $ for a Banach space and bounded linear operators not satisfying the B\v{S} Property. We further explore Property $ P_n $ for different polyhedral Banach spaces. 
 
\section{Main Results}

We begin this section with the observation that Theorem $ 2.3 $ of \cite{SPH} holds true even if the co-domain space is any Banach space with dimension at least two. Indeed, the said theorem can be stated in the following more general form, by using essentially the same arguments presented in the proof of the original result. 

\begin{theorem} \label{th:BS-property}
Let $ \mathbb{X} $ be a two-dimensional Banach space and let $ \mathbb{Y} $ be a Banach space of dimension greater than or equal to two. Let $ T \in \mathbb{L}(\mathbb{X},\mathbb{Y}) $ be such that $ M_T $ has more than two components. Then $ T $ does not satisfy the B\v{S} Property.
\end{theorem}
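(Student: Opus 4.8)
The plan is to prove the contrapositive form directly, namely to exhibit a single operator $A \in \mathbb{L}(\mathbb{X},\mathbb{Y})$ witnessing that $T$ fails the B\v{S} Property: one for which $T \bot_B A$ yet $Tx \not\bot_B Ax$ for \emph{every} $x \in M_T$. Two standard facts drive the argument. First, for nonzero $u,v \in \mathbb{Y}$, writing $J(u) = \{f \in S_{\mathbb{Y}^*} : f(u) = \|u\|\}$ for the weak$^*$-compact convex set of norming functionals, one has $u \bot_B v$ if and only if $0 \in \{f(v) : f \in J(u)\}$. Second, since $\mathbb{X}$ is finite-dimensional, $M_T$ is compact and the convex map $\lambda \mapsto \|T + \lambda A\|$ has one-sided derivatives at $0$ equal to $\max_{x \in M_T}\max_{f \in J(Tx)} f(Ax)$ and $\min_{x \in M_T}\min_{f \in J(Tx)} f(Ax)$ respectively; hence $T \bot_B A$ holds precisely when $\max_{x \in M_T, f \in J(Tx)} f(Ax) \geq 0$ and $\min_{x \in M_T, f \in J(Tx)} f(Ax) \leq 0$. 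Thus the whole problem reduces to controlling the \emph{sign} of $f(Ax)$ as $x$ ranges over $M_T$ and $f$ over $J(Tx)$.

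Next I would analyse the geometry of $M_T$. It is a closed, antipodally symmetric subset of the circle $S_{\mathbb{X}}$. A proper connected component that is invariant under $x \mapsto -x$ must be a closed semicircle, and such a component cannot coexist with any further component; therefore, once $M_T$ has more than two components, the components occur in genuine antipodal pairs and the image of $M_T$ in $\mathbb{R}P^1$ has at least two components. I fix two of them and label the corresponding disjoint, separated antipodal pairs $K_1 = C_1 \cup (-C_1)$ and $K_2 = C_2 \cup (-C_2)$. I would also record that $T$ has rank two: if $\operatorname{rank} T \leq 1$ then $M_T$ is a single face of $S_{\mathbb{X}}$ together with its antipode, giving at most two components. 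Hence $T$ is injective and $Y_0 := T(\mathbb{X})$ is a two-dimensional subspace of $\mathbb{Y}$; in particular distinct projective components of $M_T$ yield genuinely distinct rays $\{Tx\}$ in $Y_0$.

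The construction of $A$ is the core of the argument. I would seek $A$ mapping into $Y_0$ with $f(Ax) > 0$ for all $x \in K_1,\ f \in J(Tx)$, with $f(Ax) < 0$ for all $x \in K_2$, and with a fixed strict sign on each remaining component. Because $A(\mathbb{X}) \subseteq Y_0$ and any $f \in J(Tx)$ restricts to a norming functional of $Tx$ \emph{within} $Y_0$ taking the same value on $Ax$, it suffices to impose the sign control against functionals computed in the two-dimensional space $Y_0$; this places us in exactly the configuration of the two-dimensional result, so $A$ is produced by essentially the same construction used there. Concretely, one chooses a separating linear functional on $\mathbb{X}$ whose kernel direction lies in a gap between the two projective components, together with an output direction in $Y_0$ (it is here that $\dim \mathbb{Y} \geq 2$ is used) so as to realise opposite signs on $K_1$ and $K_2$; the symmetry $f_{-x} = -f_x$ makes $x \mapsto \{f(Ax) : f \in J(Tx)\}$ well defined on $\mathbb{R}P^1$, so prescribing one sign per projective component is consistent.

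With such an $A$ the verification is immediate from the criterion: strict positivity on $K_1$ gives $\max_{x,f} f(Ax) > 0$ and strict negativity on $K_2$ gives $\min_{x,f} f(Ax) < 0$, so $T \bot_B A$; while for each individual $x \in M_T$ the interval $\{f(Ax) : f \in J(Tx)\}$ lies strictly to one side of $0$, so $Tx \not\bot_B Ax$ and $T$ fails the B\v{S} Property. I expect the main obstacle to be the construction in the previous paragraph: one must secure the sign of $f(Ax)$ \emph{simultaneously for all} norming functionals and \emph{uniformly} over each compact component, while $A$ has only two degrees of freedom. The injectivity of $T$ together with the reduction to the two-dimensional image $Y_0$ is what makes this sign-separation achievable by a single linear map, and this is precisely the step at which the hypothesis $\dim \mathbb{Y} \geq 2$ is indispensable.
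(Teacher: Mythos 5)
Your proposal is correct in substance, but it is organised quite differently from the paper, which for this theorem offers no proof at all beyond the remark that Theorem $2.3$ of \cite{SPH} (the case $\mathbb{Y}=\mathbb{X}$) ``holds true by essentially the same arguments'' when the co-domain is any Banach space of dimension at least two. Your contribution is to replace that remark by an actual reduction: since $\operatorname{rank} T\leq 1$ would force $M_T$ to have at most two components, $T$ has rank two; any competitor $A$ may be sought with range in the two-dimensional subspace $Y_0=T(\mathbb{X})$; and for such $A$ both the relation $T\perp_B A$ and each relation $Tx\perp_B Ax$ can be tested entirely inside $Y_0$, because vectors of $Y_0$ have the same norm in $Y_0$ as in $\mathbb{Y}$, and norming functionals of $Tx$ restrict to, and by Hahn--Banach extend from, $Y_0$ without changing their values on the range of $A$ (so your derivative criterion, which is equivalent to Theorem $2.2$ of \cite{Sa}, the tool the paper itself uses in Theorem \ref{th:BS-n-dim}, is insensitive to the ambient space). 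This makes the general case --- $\mathbb{Y}$ arbitrary, even infinite-dimensional --- a formal consequence of the case $\dim\mathbb{Y}=2$, and shows in addition that the witnessing operator can always be chosen with range inside $T(\mathbb{X})$. That modularity is a genuine gain over the paper's ``same arguments'' assertion, which in principle requires re-checking the proof in \cite{SPH} line by line for a general co-domain.

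Two caveats. A minor one: a proper component of $M_T$ invariant under $x\mapsto -x$ cannot be a closed semicircle; a symmetric connected subset of $S_{\mathbb{X}}$ is forced to be all of $S_{\mathbb{X}}$. Your intended conclusion --- components occur in antipodal pairs, so more than two components yields at least two components in $\mathbb{R}P^1$ --- is correct anyway. The substantial one: the step you defer (``$A$ is produced by essentially the same construction used there'') is where the entire content of the theorem sits, and your rank-one sketch $A=\phi(\cdot)\,w$ with $\ker\phi$ spanned by a gap vector tacitly needs a $w\in Y_0$ with $Tx\not\perp_B w$ for \emph{every} $x\in M_T$, i.e.\ that $\bigcup_{x\in M_T}(Tx)^{\perp}$ does not cover $Y_0$. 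That is exactly the covering phenomenon behind alternative $(i)$ of Theorem \ref{th:BS-n-dim}, and it is not formal: a closed symmetric set with four components can perfectly well have covering orthogonality sets (the four extreme points of $B_{\ell_{\infty}^{2}}$ do, by Theorem \ref{th:P_n-property}). What rules it out in your setting is that $T(M_T)=T(B_{\mathbb{X}})\cap S_{Y_0}$ with $T(B_{\mathbb{X}})$ a symmetric convex body inscribed in $B_{Y_0}$, and exploiting that convexity is a genuine geometric argument, not a bookkeeping step. Since you explicitly flag this as the main obstacle and outsource it to the known two-dimensional theorem --- just as the paper outsources everything to \cite{SPH} --- your proposal ends at the same level of completeness as the paper's own treatment, but with a cleaner and more general superstructure around the one step that truly requires work.
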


As a corollary to Theorem \ref{th:BS-property}, we can provide an elementary condition on $ M_T $ so that $ T $ does not satisfy the B\v{S} Property, when $ \mathbb{X} $ is a two-dimensional Banach space.
 
\begin{cor} \label{cor:BS-2-dim}
Let $ \mathbb{X} $ be a two-dimensional Banach space and let $ \mathbb{Y} $ be a Banach space of dimension greater than or equal to two. Let $ T \in \mathbb{L}(\mathbb{X},\mathbb{Y}) $ be such that there exist $ x, y \in M_T $ with $ x \neq \pm y $ and $ \frac{x+y}{\|x+y\|}, \frac{x-y}{\|x-y\|} \notin M_T. $ Then $ T $ does not satisfy the B\v{S} Property.
\end{cor}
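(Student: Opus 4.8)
The plan is to deduce the statement directly from Theorem \ref{th:BS-property} by showing that the hypotheses force $ M_T $ to have more than two connected components. The whole argument is a piece of elementary topology on the unit sphere, fed into the quoted theorem.

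First I would record the symmetry $ M_T = -M_T $, which is immediate from $ \|T(-v)\| = \|Tv\| $. Together with $ x, y \in M_T $ this yields $ x, y, -x, -y \in M_T $, and since $ x \neq \pm y $ these four points are distinct. As $ x \neq y $ and $ x \neq -y $, the vectors $ x-y $ and $ x+y $ are nonzero, so the four ``midpoints'' $ \pm\frac{x+y}{\|x+y\|} $ and $ \pm\frac{x-y}{\|x-y\|} $ are well-defined unit vectors; applying the symmetry $ M_T = -M_T $ to the hypothesis $ \frac{x+y}{\|x+y\|}, \frac{x-y}{\|x-y\|} \notin M_T $ shows that none of these four midpoints belongs to $ M_T $.

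Next I would use that $ S_{\mathbb{X}} $ is homeomorphic to a circle and that $ v \mapsto -v $ is a fixed-point-free involution, so the two antipodal pairs $ \{x,-x\} $ and $ \{y,-y\} $ must interleave, giving the cyclic order $ x, y, -x, -y $. Each arc between two cyclically consecutive points is the normalization of the cone spanned by its endpoints and hence contains exactly one midpoint in its interior: $ \frac{x+y}{\|x+y\|} $ lies strictly between $ x $ and $ y $, the point $ \frac{y-x}{\|y-x\|} = -\frac{x-y}{\|x-y\|} $ lies between $ y $ and $ -x $, the point $ -\frac{x+y}{\|x+y\|} $ between $ -x $ and $ -y $, and $ \frac{x-y}{\|x-y\|} $ between $ -y $ and $ x $. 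Thus every arc joining two consecutive points among $ x, y, -x, -y $ contains a point outside $ M_T $. Since a connected subset of a circle is an arc, any connected subset of $ M_T $ containing two of these four points would have to contain one of the two arcs joining them, each of which passes through a missing midpoint — a contradiction. Therefore $ x, y, -x, -y $ lie in four distinct components of $ M_T $, so $ M_T $ has at least four, and in particular more than two, connected components, and Theorem \ref{th:BS-property} applies.

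I expect the only delicate point to be the topological bookkeeping on the circle: verifying that the two antipodal pairs interleave into the cyclic order $ x, y, -x, -y $, that each midpoint falls strictly inside the correct arc, and that a connected subset of $ S_{\mathbb{X}} $ joining two points must contain a full arc between them. These facts are routine but must be stated carefully, since it is precisely the symmetry $ M_T = -M_T $ that upgrades the two hypothesized gaps into the four gaps needed to separate all four points; everything else follows at once from the quoted theorem.
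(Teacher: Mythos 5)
Your proposal is correct and follows essentially the same route as the paper: both reduce to Theorem \ref{th:BS-property} by showing that the four points $ \pm x, \pm y \in M_T $ are forced into distinct components of $ M_T $ because the four excluded normalized midpoints $ \pm\frac{x+y}{\|x+y\|}, \pm\frac{x-y}{\|x-y\|} $ separate them on the sphere. The paper phrases this by explicitly constructing the four open arcs between the midpoints (your argument via cyclic order and connected subsets of the circle is the dual description of the same configuration), so the two proofs differ only in presentation.
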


\begin{proof}
We claim that $ M_T $ has more than two components. Let $ u_1 = \frac{x+y}{\|x+y\|} $ and $ u_2 = \frac{x-y}{\|x-y\|}. $ Consider the following subsets of $ S_{\mathbb{X}}: $
\[ S_1 = \left\{ \frac{(1-t)u_1+tu_2}{\|(1-t)u_1+tu_2\|} : t \in (0,1) \right\},\]
\[ S_2 = \left\{ \frac{(1-t)u_1+t(-u_2)}{\|(1-t)u_1-tu_2\|} : t \in (0,1) \right\},\]
\[ S_3 = -S_1 ~ \mbox{and} ~ S_4 = -S_2. \]
Then clearly $ S_i, ~i=1,2,3,4, $ are connected subsets of $ S_{\mathbb{X}} $ and by the construction of $ S_i $ we have, $ x \in S_1, ~ y \in S_2, ~ -x \in S_3 $ and $ -y \in S_4. $ Also, $ S_i \cap S_j = \phi $ for all $ i,j \in \{1,2,3,4\} $ with $ i \neq j. $ As $ S_{\mathbb{X}} \setminus \{ \pm \frac{x+y}{\|x+y\|}, \pm \frac{x-y}{\|x-y\|} \} = \bigcup \limits_{i=1}^{4} S_i, $ $ M_T \subseteq \bigcup \limits_{i=1}^{4} S_i. $ Also for each disjoint connected set $ S_i, $ $ S_i \cap M_T \neq \phi. $ Therefore, $ M_T $ must have more than two components.
Hence using Theorem \ref{th:BS-property}, we conclude that $ T $ does not satisfy the B\v{S} Property.
\end{proof}
The following example illustrates the applicability of Theorem \ref{th:BS-property} in studying the B\v{S} Property of a bounded linear operator between Banach spaces.
\begin{example} \label{example-1}
Consider a bounded linear operator $ T : \ell_{1}^{2} \to \ell_{\infty}^{3}, $ defined by $ T(x,y) = (x,y,\frac{x+y}{2}) $ for all $ (x,y) \in \ell_{1}^{2}. $ Then it is easy to see that $ \|T\|=1 $ and $ M_T =\{\pm(1,0), \pm(0,1) \}. $ Therefore by using Theorem \ref{th:BS-property}, we conclude that $ T $ does not satisfy the B\v{S} Property. 
\end{example}
If $ \mathbb{X} $ is a two-dimensional Banach space, then from Theorem $ 2.1 $ of \cite{SP} and Theorem $ 2.3 $ of \cite{SPH}, it follows that $ T \in \mathbb{L}(\mathbb{X}) $ satisfies the B\v{S} Property if and only if $ M_T = D \cup (-D), $ where $ D $ is a connected subset of $ S_{\mathbb{X}}. $ If $ \dim \mathbb{X} \geq 3 $ and $ M_T $ has more than two components then it is not known whether $ T $ will satisfy the B\v{S} Property. Our next result gives some insight in that direction, under certain assumptions on the co-domain space $ \mathbb{Y} $ and the norm attainment set $ M_T, $ for a bounded linear operator $ T \in \mathbb{L}(\mathbb{X}, \mathbb{Y}). $

\begin{theorem} \label{th:BS-n-dim}
Let $ \mathbb{X} $ be an $n$-dimensional Banach space, where $ n \geq 3 $ and let $ \mathbb{Y} $ be any Banach space. Let $ T \in \mathbb{L}(\mathbb{X},\mathbb{Y}), $ with $ |M_T| \geq 4, $ be such that the following conditions are satisfied:\\
(a) There exists a basis $ \{x_1, x_2, x_3, \ldots, x_n \} $ of $ \mathbb{X} $ such that $ x_1, x_2 \in M_T. $\\
(b) There exist scalars $ \alpha_3, \alpha_4, \ldots, \alpha_n $ and $ \beta_3, \beta_4, \ldots, \beta_n $ such that for each $ w = c_{1w}x_1+c_{2w}x_2+\ldots+c_{nw}x_n \in M_T, $ 
we have, 
\[ c_{1w}+c_{2w}+\alpha_3 c_{3w}+ \ldots + \alpha_n c_{nw} \neq 0~ \textit{and}~  c_{1w}-c_{2w}+\beta_3 c_{3w}+ \ldots + \beta_n c_{nw} \neq 0. \]
Then at least one of the following is true:\\
$(i)$ $ \bigcup \limits_{x\in M_T} (Tx)^{\bot} = \mathbb{Y}. $\\
$(ii)$ $ T $ does not satisfy the B\v{S} Property.			
\end{theorem}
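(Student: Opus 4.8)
The plan is to argue by contraposition: I will show that if statement $(i)$ fails, then statement $(ii)$ holds, which is logically equivalent to the disjunction "$(i)$ or $(ii)$". So suppose $\bigcup_{x \in M_T}(Tx)^{\bot} \neq \mathbb{Y}$ and fix a vector $z_0 \in \mathbb{Y} \setminus \bigcup_{x \in M_T}(Tx)^{\bot}$; thus $Tx \not\bot_{B} z_0$ for every $x \in M_T$. Since $0 \in (Tx)^{\bot}$ always, we have $z_0 \neq 0$, and since $(Tx)^{\bot} = \mathbb{Y}$ whenever $Tx = 0$, we also get $Tx \neq 0$ for each $x \in M_T$ (so in particular $\|T\| > 0$, the case $T = 0$ falling trivially under $(i)$). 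The goal is then to build a single operator $A \in \mathbb{L}(\mathbb{X},\mathbb{Y})$ witnessing the failure of the B\v{S} Property, that is, $T \bot_{B} A$ while $Tx \not\bot_{B} Ax$ for all $x \in M_T$.

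For the construction I use two ingredients. First, for $u \in \mathbb{Y}\setminus\{0\}$ and $v \in \mathbb{Y}$, write $u \bot_{B}^{+} v$ if $\|u + \lambda v\| \geq \|u\|$ for all $\lambda \geq 0$, and $u \bot_{B}^{-} v$ if the same holds for all $\lambda \leq 0$. Convexity of $\lambda \mapsto \|u + \lambda v\|$ yields two elementary facts I will record inline: $(1)$ every $v$ satisfies at least one of $u \bot_{B}^{+} v$, $u \bot_{B}^{-} v$, with $u \bot_{B} v$ holding exactly when both hold; and $(2)$ $u \bot_{B}^{+} v \Leftrightarrow u \bot_{B}^{-}(-v)$. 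Combining these with $Tx \not\bot_{B} z_0$ shows that, for each $x \in M_T$, \emph{exactly one} of $Tx \bot_{B}^{+} z_0$ or $Tx \bot_{B}^{-} z_0$ holds. Second, I use condition (b): let $f, g$ be the linear functionals on $\mathbb{X}$ defined in the chosen basis by $f(w) = c_{1w} + c_{2w} + \sum_{i \geq 3}\alpha_i c_{iw}$ and $g(w) = c_{1w} - c_{2w} + \sum_{i \geq 3}\beta_i c_{iw}$, and define the rank-one operators $A_f w = f(w) z_0$ and $A_g w = g(w) z_0$. Condition (b) says precisely that $f(w), g(w) \neq 0$ for all $w \in M_T$, so for either choice $A \in \{A_f, A_g\}$ the vector $Ax$ is a nonzero multiple of $z_0$; by homogeneity of Birkhoff--James orthogonality in the second slot, $Tx \bot_{B} Ax \Leftrightarrow Tx \bot_{B} z_0$, which is false. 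Hence for \emph{both} candidates the requirement $Tx \not\bot_{B} Ax$ ($x \in M_T$) is automatic, and only $T \bot_{B} A$ remains to be arranged.

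The sufficiency criterion I will prove inline is: if there exist $u, v \in M_T$ with $Tu \bot_{B}^{+} Au$ and $Tv \bot_{B}^{-} Av$, then $T \bot_{B} A$. Indeed, using $\|u\| = \|v\| = 1$ and $u,v \in M_T$, for $\lambda \geq 0$ one has $\|T + \lambda A\| \geq \|Tu + \lambda Au\| \geq \|Tu\| = \|T\|$, and for $\lambda \leq 0$ the same estimate with $v$ gives $\|T + \lambda A\| \geq \|Tv\| = \|T\|$; together these give $\|T+\lambda A\|\geq\|T\|$ for all $\lambda$. It remains to select the right candidate using $x_1, x_2 \in M_T$ from (a). Observe $f(x_1) = f(x_2) = 1$ and $g(x_1) = 1$, $g(x_2) = -1$, so $A_f x_1 = A_f x_2 = z_0$, while $A_g x_1 = z_0$ and $A_g x_2 = -z_0$. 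I then split into the four sign patterns for which side of $z_0$ the vectors $Tx_1$ and $Tx_2$ sit on: when $z_0$ lies on \emph{opposite} sides for $x_1$ and $x_2$ I take $A = A_f$ (with $(u,v)$ the suitable ordering of $(x_1,x_2)$), and when $z_0$ lies on the \emph{same} side for both I take $A = A_g$, invoking fact $(2)$ to turn $A_g x_2 = -z_0$ into the opposite side. In every one of the four cases this produces witnesses $u, v \in \{x_1, x_2\}$ satisfying the criterion, so $T \bot_{B} A$, completing the contrapositive.

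The main obstacle — and where hypotheses (a) and (b) do the real work — is securing $T \bot_{B} A$ while keeping $Tx \not\bot_{B} Ax$ simultaneously on all of $M_T$. Forcing $Ax$ to be a nonzero multiple of $z_0$ on $M_T$ is what makes the non-orthogonality automatic, and this is possible precisely because (b) supplies functionals nonvanishing on $M_T$; the subtlety is that a single rank-one operator pointing in the $z_0$ direction need not be orthogonal to $T$, which is exactly why \emph{two} functionals are required. The evaluations $f(x_1) = f(x_2) = 1$ versus $g(x_1) = 1,\ g(x_2) = -1$ guarantee that for at least one of $A_f, A_g$ the images of $x_1, x_2$ land on the two required opposite sides of $z_0$ relative to $Tx_1, Tx_2$, regardless of the sign pattern. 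I expect the only routine care to lie in verifying the convexity facts $(1)$--$(2)$ and checking that the four-case bookkeeping is exhaustive; notably, the argument uses only $x_1, x_2 \in M_T$ from (a) together with (b), and does not appear to need more of the structure of $M_T$.
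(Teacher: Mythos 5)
Your proof is correct and takes essentially the same route as the paper's: argue contrapositively, fix $z_0$ outside $\bigcup_{x\in M_T}(Tx)^{\bot}$, build the two rank-one operators along $z_0$ from the functionals supplied by condition (b) (your $A_f$ and $A_g$ are precisely the operators the paper defines in its Case II and Case I respectively), and split into the same sign patterns for which side of $z_0$ the vectors $Tx_1, Tx_2$ lie on. The only cosmetic difference is that you prove inline the two ingredients --- the $\bot_B^{\pm}$ decomposition and the criterion that $Tu \bot_B^{+} Au$, $Tv \bot_B^{-} Av$ for $u,v \in M_T$ forces $T \bot_B A$ --- which the paper instead cites as Proposition $2.1$ and Theorem $2.2$ of its reference \cite{Sa}.
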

\begin{proof}
Assuming $ (i) $ is not true, we show that $ T $ does not satisfy the B\v{S} Property.  Under this assumption, $ \bigcup \limits_{x\in M_T} (Tx)^{\bot} \subsetneqq \mathbb{Y}. $ Let us take $ z \in \mathbb{Y} \setminus \bigcup \limits_{x\in M_T} (Tx)^{\bot}. $ We note that it follows from Proposition $ 2.1 $ of \cite{Sa} that for each $ i=1,2, $ either $ z \in (Tx_i)^{+} $ or $ z \in (Tx_i)^{-}. $ \\

\textbf{Case I:} Let $ z \in (Tx_1)^{+} \cap (Tx_2)^{+} $ or $ z \in (Tx_1)^{-} \cap (Tx_2)^{-}. $ Let us define $ A : \mathbb{X} \to \mathbb{Y} $ by
 $$ Ax_1=z, Ax_2= -z ~\mbox{and}~  Ax_i = \beta_i z ~\mbox{for} ~ i = 3, 4, \ldots, n. $$
  If $ z \in (Tx_1)^{+} \cap (Tx_2)^{+}, $ then $ Ax_1 \in (Tx_1)^{+} $ and $ Ax_2 \in (Tx_2)^{-}. $ On the other hand, if $ z \in (Tx_1)^{-} \cap (Tx_2)^{-}, $ then $ Ax_1 \in (Tx_1)^{-} $ and $ Ax_2 \in (Tx_2)^{+}. $ Therefore, using Theorem $ 2.2 $ of \cite{Sa}, we have $ T \bot_{B} A, $ in both the cases. We claim that $ Tu \not\perp_{B} Au $ for any $ u \in M_T. $ Let $ u = c_{1u}x_1+c_{2u}x_2+\ldots+c_{nu}x_n \in M_T. $ Then, $ Au = (c_{1u}-c_{2u}+c_{3u}\beta_3+\ldots+c_{nu}\beta_n)z = \gamma z, $ (say). As $ \gamma \neq 0 $ and $ Tu \not\perp_{B} z, $ so we conclude that $ Tu \not\perp_{B} Au. $ Thus $ T $ does not satisfy the B\v{S} Property.\\  
  
\textbf{Case II:} Let $ z \in (Tx_1)^{+} \cap (Tx_2)^{-} $ or $ z \in (Tx_1)^{-} \cap (Tx_2)^{+}. $ Let us define $ A : \mathbb{X} \to \mathbb{Y} $ by $ Ax_1=z, $ $ Ax_2= z $ and $ Ax_i = \alpha_i z $ for $ i = 3, 4, \ldots, n. $ Proceeding in a similar manner, we can conclude that $ T \bot_{B} A $ but there exists no $ u \in M_T $ such that $ Tu \bot_{B} Au. $ Therefore $ T $ does not satisfy the B\v{S} Property. This completes the proof of the theorem.
\end{proof}

The above theorem indirectly hints at the importance of the notion of Property $ P_n $ for a Banach space in the study of the B\v{S} Property of a bounded linear operator. We state this formally in the following corollary. 

\begin{cor} \label{cor:BS-n-dim}
Let $ \mathbb{X} $ be an $n$-dimensional Banach space, where $ n \geq 3 $ and let $ \mathbb{Y} $ be a Banach space such that $ \mathbb{Y} $ has Property $ P_m,$ for some $ m \in \mathbb{N}. $ Let $ T \in \mathbb{L}(\mathbb{X},\mathbb{Y}) $ be such that the following conditions are satisfied:\\
(a) $ |M_T| \geq 4 $ and $ |T(M_T)| \leq 2m, $ \\ 
(b) There exists a basis $ \{x_1, x_2, x_3, \ldots, x_n \} $ of $ \mathbb{X} $ such that $ x_1, x_2 \in M_T, $\\
(c) There exist scalars $ \alpha_3, \alpha_4, \ldots, \alpha_n $ and $ \beta_3, \beta_4, \ldots, \beta_n $ such that for each $ w = c_{1w}x_1+c_{2w}x_2+\ldots+c_{nw}x_n \in M_T, $
we have that
\[  c_{1w}+c_{2w}+\alpha_3 c_{3w}+ \ldots + \alpha_n c_{nw} \neq 0~\textit{and}~ c_{1w}-c_{2w}+\beta_3 c_{3w}+ \ldots + \beta_n c_{nw} \neq 0. \]
Then $ T $ does not satisfy the B\v{S} Property.
\end{cor}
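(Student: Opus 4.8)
The plan is to invoke Theorem~\ref{th:BS-n-dim} and then eliminate its first alternative by means of Property~$P_m$. First I would note that hypotheses (b) and (c) of the corollary, together with the requirement $|M_T| \geq 4$ contained in hypothesis (a), are exactly hypotheses (a) and (b) of Theorem~\ref{th:BS-n-dim}. Applying that theorem yields that at least one of the two statements holds: either
\[ \bigcup_{x \in M_T} (Tx)^{\bot} = \mathbb{Y}, \]
or $T$ fails the B\v{S} Property. It therefore suffices to prove that the first alternative is impossible under the extra hypotheses, since then the second must hold, giving precisely the desired conclusion.

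The crux is to bound the union $\bigcup_{x \in M_T}(Tx)^{\bot}$ using the remaining data, namely $|T(M_T)| \leq 2m$ and Property~$P_m$ of $\mathbb{Y}$. Here I would use that Birkhoff-James orthogonality is homogeneous and sign-invariant in its left argument, so that $(cu)^{\bot} = u^{\bot}$ for every nonzero scalar $c$; in particular $(Tx)^{\bot} = (-Tx)^{\bot}$, and each $(Tx)^{\bot}$ coincides with $u^{\bot}$ for the unit vector $u = Tx/\|T\|$. Since $M_T$ is symmetric ($x \in M_T$ if and only if $-x \in M_T$) and $\|Tx\| = \|T\| \neq 0$ for every $x \in M_T$, the set $T(M_T)$ is symmetric and avoids the origin, so its elements split into antipodal pairs $\{Tx, -Tx\}$. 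Writing $k$ for the number of such pairs we get $2k = |T(M_T)| \leq 2m$, hence $k \leq m$, and after normalizing one representative from each pair I can select $u_1, \ldots, u_k \in S_{\mathbb{Y}}$ with
\[ \bigcup_{x \in M_T}(Tx)^{\bot} = \bigcup_{i=1}^{k} u_i^{\bot}. \]

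Finally, since $\mathbb{Y}$ has Property~$P_m$ and $k \leq m$, it also enjoys Property~$P_k$ by the monotonicity remark following the definition of Property~$P_n$; thus $\bigcup_{i=1}^{k} u_i^{\bot} \subsetneqq \mathbb{Y}$, which is exactly the negation of the first alternative. Consequently the second alternative of Theorem~\ref{th:BS-n-dim} holds and $T$ does not satisfy the B\v{S} Property. I expect the only genuinely delicate point to be the bookkeeping in this reduction: one must combine the sign-invariance of $(\cdot)^{\bot}$ with the antipodal symmetry of $T(M_T)$ to convert the bound $|T(M_T)| \leq 2m$ on the number of image vectors into a bound $k \leq m$ on the number of distinct orthogonality sets $u_i^{\bot}$, which is the quantity that Property~$P_m$ actually controls.
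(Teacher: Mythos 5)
Your proposal is correct and follows exactly the paper's own argument: invoke Theorem~\ref{th:BS-n-dim} and rule out alternative $(i)$ by converting the bound $|T(M_T)| \leq 2m$ into at most $m$ distinct orthogonality sets (via the antipodal symmetry of $T(M_T)$ and the scalar-invariance $(cu)^{\bot} = u^{\bot}$), so that Property $P_m$ forces $\bigcup_{x \in M_T}(Tx)^{\bot} \subsetneqq \mathbb{Y}$. The paper compresses this bookkeeping into a single sentence; your version merely makes it explicit.
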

\begin{proof}
As $ T(M_T) $ contains at most $ 2m $ elements which are pairwise scalar multiples of one another and the co-domain space $ \mathbb{Y} $ has Property $ P_m, $ we must have $ \bigcup \limits_{x\in M_T} (Tx)^{\bot} \subsetneqq \mathbb{Y}. $ Then from Theorem \ref{th:BS-n-dim}, we conclude that $ T $ does not satisfy the B\v{S} Property.
\end{proof}

We now give an example to illustrate the applicability of the Corollary \ref{cor:BS-n-dim} in studying the B\v{S} Property of a bounded linear operator between Banach spaces. Here we would like to mention that every smooth Banach space of dimension at least $ 2, $ has Property $ P_n, $ for each $ n \in \mathbb{N}. $
\begin{example} \label{example-2}
	Consider a bounded linear operator $ T : \ell_{\infty}^{3} \to \ell_{2}^{3}, $ defined by $ Tx= \frac{x}{\sqrt{3}} $ for all $ x \in \ell_{\infty}^{3}. $ Then it is easy to see that $ \|T\|=1, $ $M_T= \{ \pm(1,1,1), \pm(-1,1,1), $ \\
$	\pm(-1,-1,1), \pm(1,-1,1) \} $ and $ T(M_T) = \{ \pm(\frac{1}{\sqrt{3}}, \frac{1}{\sqrt{3}}, \frac{1}{\sqrt{3}}), \pm(\frac{-1}{\sqrt{3}}, \frac{1}{\sqrt{3}}, \frac{1}{\sqrt{3}}), $ \\
$\pm(\frac{-1}{\sqrt{3}}, \frac{-1}{\sqrt{3}}, \frac{1}{\sqrt{3}}), \pm(\frac{1}{\sqrt{3}}, \frac{-1}{\sqrt{3}}, \frac{1}{\sqrt{3}}) \}. $ Consider $ x_1= (1,1,1), x_2= (-1,1,1), x_3= $ \\ $(-1,-1,1) $ and $ x_4= (1,-1,1). $ Clearly $ \{x_1,x_2,x_3\} $ forms a basis of $ \ell_{\infty}^{3}. $ If we choose $ \alpha= \beta=1, $ then condition (c) of Corollary \ref{cor:BS-n-dim} is satisfied. Also, $ \ell_{2}^{3} $ has Property $ P_n, $ for any $ n \in \mathbb{N}, $ as $ \ell_{2}^{3} $ is a smooth space. Therefore, by using Corollary \ref{cor:BS-n-dim}, we conclude that $ T $ does not satisfy the B\v{S} Property.
\end{example}

It is worth mentioning that Theorem $ 2.2 $ of \cite{SPH} holds true when the domain space is any finite-dimensional Banach space and the co-domain space is any smooth Banach space of dimension at least two. Indeed, the said theorem can be stated in the following more general form, by using the same arguments presented in the proof of the original result.
\begin{theorem}
Let $ \mathbb{X} $ be a finite-dimensional Banach space and $ \mathbb{Y} $ be a smooth Banach space of dimension greater than or equal to two. Let $ T \in \mathbb{L}(\mathbb{X}, \mathbb{Y}) $ be such that $ M_T $ is a countable set with more than two points. Then $ T $ does not satisfy the B\v{S} Property.
\end{theorem}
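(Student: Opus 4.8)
The plan is to disprove the B\v{S} Property by explicitly constructing a rank-one operator $ A $ that witnesses its failure. Since $ \mathbb{X} $ is finite-dimensional, $ M_T $ is a closed subset of the compact sphere $ S_{\mathbb{X}} $, hence compact, and it is symmetric because $ \|T(-x)\| = \|Tx\| $. A symmetric subset of the sphere has even cardinality, so $ |M_T| > 2 $ forces $ |M_T| \geq 4 $; in particular the hypotheses force $ \dim \mathbb{X} \geq 2 $ and $ T \neq 0 $, and there exist $ a, b \in M_T $ with $ a \neq \pm b $, which are therefore linearly independent. Smoothness of $ \mathbb{Y} $ enters through the unique supporting functional: for each $ x \in M_T $ let $ f_x \in S_{\mathbb{Y}^{*}} $ be the unique functional with $ f_x(Tx) = \|T\| $. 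In a smooth space one has $ Tx \bot_B w \iff f_x(w) = 0 $, together with $ w \in (Tx)^{+} \iff f_x(w) \geq 0 $ and $ w \in (Tx)^{-} \iff f_x(w) \leq 0 $, and moreover $ f_{-x} = -f_x $.

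I would seek $ A $ of the form $ Ax = \phi(x)\, z $ for suitable $ \phi \in \mathbb{X}^{*} $ and $ z \in \mathbb{Y} $, and set $ g(x) := f_x(Ax) = \phi(x) f_x(z) $ for $ x \in M_T $. The construction must achieve three things: (R1) $ \phi(x) \neq 0 $ for all $ x \in M_T $; (R2) $ f_x(z) \neq 0 $ for all $ x \in M_T $; and (R3) $ g(a) > 0 $ while $ g(b) < 0 $. Granting these, $ Aa = \phi(a) z \in (Ta)^{+} $ and $ Ab = \phi(b) z \in (Tb)^{-} $, so Theorem $ 2.2 $ of \cite{Sa} gives $ T \bot_B A $; on the other hand, (R1) and (R2) yield $ f_x(Ax) = g(x) \neq 0 $ for every $ x \in M_T $, i.e. $ Tx \not\bot_B Ax $ for all $ x \in M_T $. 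Hence $ T $ does not satisfy the B\v{S} Property.

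The construction proceeds in two stages, and this is where countability is essential. First I would choose $ z $: since $ \mathbb{X} $ is finite-dimensional, $ W := T(\mathbb{X}) $ is a finite-dimensional subspace of $ \mathbb{Y} $, and for each $ x \in M_T $ the restriction $ f_x|_W $ is nonzero (as $ f_x(Tx) = \|T\| \neq 0 $), so $ \ker(f_x|_W) $ is a proper hyperplane of $ W $. As $ M_T $ is countable, $ \bigcup_{x \in M_T}\ker(f_x|_W) $ is a countable union of nowhere dense closed sets and so, by the Baire category theorem, cannot exhaust $ W $; any $ z $ in the complement satisfies (R2). With $ z $ fixed, put $ s(x) := \operatorname{sign} f_x(z) \in \{\pm 1\} $. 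Next I would choose $ \phi $: the set $ \Phi_0 = \{\phi \in \mathbb{X}^{*} : \phi(x) \neq 0 \ \text{for all}\ x \in M_T\} $ has complement a countable union of hyperplanes, hence is co-meager in $ \mathbb{X}^{*} $; meanwhile, since $ a, b $ are linearly independent, $ U := \{\phi : s(a)\phi(a) > 0,\ -s(b)\phi(b) > 0\} $ is a nonempty open subset of $ \mathbb{X}^{*} $. Any $ \phi \in U \cap \Phi_0 $, which is nonempty by Baire, satisfies (R1) and yields $ g(a) = \phi(a) f_a(z) > 0 $ and $ g(b) = \phi(b) f_b(z) < 0 $, which is (R3).

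The main obstacle is precisely this simultaneous fulfilment of the non-vanishing condition (R1) over the entire countable set $ M_T $ together with the two prescribed signs at $ a $ and $ b $: one cannot merely prescribe $ \phi $ on a basis and expect it to avoid the countably many bad hyperplanes, so the genericity argument, which hinges on countability of $ M_T $, is the crux. The three hypotheses enter exactly here: countability of $ M_T $ makes the offending unions meager, smoothness of $ \mathbb{Y} $ converts $ Tx \not\bot_B Ax $ into the clean scalar inequality $ g(x) \neq 0 $, and $ |M_T| > 2 $ supplies the two linearly independent points $ a, b $ needed to realize opposite signs of $ g $.
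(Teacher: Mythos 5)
Your proof is correct, and its skeleton is the one the paper has in mind: the paper supplies no argument of its own here, deferring to the proof of Theorem $2.2$ of \cite{SPH}, whose template is exactly what the paper's Theorem~\ref{th:BS-n-dim} abstracts --- build a rank-one operator $A=\phi(\cdot)\,z$, use smoothness to identify each $(Tx)^{\bot}$ with the hyperplane $\ker f_x$, use countability of $M_T$ plus Baire category to choose $z$ outside $\bigcup_{x\in M_T}\ker f_x$, certify $T\bot_B A$ via Sain's $(Tx)^{+}/(Tx)^{-}$ criterion (Theorem $2.2$ of \cite{Sa}) at two non-antipodal norming points, and rule out $Tx\bot_B Ax$ on all of $M_T$ because $f_x(Ax)\neq 0$. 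The genuine difference lies in how $\phi$ is produced. The paper's template pins $\phi$ down on a basis ($\phi(x_1)=1$, $\phi(x_2)=\pm 1$, $\phi(x_i)=\alpha_i$ or $\beta_i$), which is why Theorem~\ref{th:BS-n-dim} must carry the non-vanishing hypotheses (b): the scalars $\alpha_i,\beta_i$ can indeed be chosen generically by countability, but at points of $M_T\cap\mathrm{span}\{x_1,x_2\}$ there are no free scalars left, and the conditions fail outright whenever $(x_1+x_2)/\|x_1+x_2\|$ or $(x_1-x_2)/\|x_1-x_2\|$ lies in $M_T$. This is a real obstruction, not a hypothetical one: for the formal identity from the regular-hexagon plane into $\ell_2^2$, where $M_T$ consists of the six vertices, \emph{every} admissible pair $x_1,x_2$ is defective in this sense, so the rigid prescription needs either a fallback (in dimension two, Theorem~\ref{th:BS-property}) or a careful selection of the pair that neither the paper nor \cite{SPH} spells out. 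Your second Baire-category argument in $\mathbb{X}^{*}$ --- intersecting the open set of sign conditions at $a,b$ with the co-meager set $\Phi_0$ --- prescribes only two signs, never values, so the $\mathrm{span}\{a,b\}$ pathology never arises and the argument is uniform in all dimensions $\geq 2$. The cost (one extra application of Baire) is negligible; what it buys is robustness precisely at the step the paper leaves implicit.
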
 

 In the remaining part of this article, we focus on Property $ P_n $ for polyhedral Banach spaces. Our first observation reveals that given any polyhedral Banach space $ \mathbb{X}, $ there exists a natural number $ n_0 $ such that $ \mathbb{X} $ does not have Property $ P_n, $ for any $ n \geq n_0. $
 
\begin{theorem} \label{th:P_n-property}
Let $\mathbb{X}$ be a finite-dimensional polyhedral Banach space such that $ B_{\mathbb{X}} $ has exactly $ 2n $ extreme points. Then $ \mathbb{X} $ does not have Property $ P_n. $
\end{theorem}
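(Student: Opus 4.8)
The plan is to prove the statement directly by negating Property $P_n$: I will exhibit $n$ unit vectors $x_1,\dots,x_n \in S_{\mathbb{X}}$ for which $\bigcup_{i=1}^n x_i^{\bot} = \mathbb{X}$. Since $B_{\mathbb{X}}$ is a symmetric polytope with exactly $2n$ extreme points, these occur in $n$ antipodal pairs $\pm v_1,\dots,\pm v_n$, and I would simply take $x_i = v_i$. The first thing to record is that Birkhoff--James orthogonality only sees a vector up to sign, so $v_i^{\bot} = (-v_i)^{\bot}$; this is precisely why one representative from each antipodal pair, i.e. $n$ vectors in total, should suffice to cover the whole space.

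The main device is to pass to the dual ball and reformulate orthogonality in terms of supporting functionals. Recall the standard James characterization: for $y \in \mathbb{X}$ one has $v_i \bot_B y$ if and only if there is a functional $f \in S_{\mathbb{X}^*}$ with $f(v_i)=1$ and $f(y)=0$. Writing $G_i = \{f \in B_{\mathbb{X}^*} : f(v_i)=1\}$ for the set of supporting functionals at $v_i$, this says $y \in v_i^{\bot}$ exactly when the evaluation functional $\widehat{y}\colon f \mapsto f(y)$ vanishes somewhere on $G_i$. I would then invoke polar duality for the polytope $B_{\mathbb{X}}$: because each $v_i$ is an extreme point of $B_{\mathbb{X}}$, each $G_i$ is a facet of the dual polytope $B_{\mathbb{X}^*}$. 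Together with the antipodal facets $-G_i = \{f : f(-v_i)=1\}$, the $2n$ facets $\pm G_1,\dots,\pm G_n$ exhaust the boundary $\partial B_{\mathbb{X}^*}$.

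With this in hand the covering is almost immediate. Fix $y \in \mathbb{X}$; the case $y=0$ is trivial since $0 \in v_i^{\bot}$. For $y \neq 0$ the functional $\widehat{y}$ is a nonzero linear functional on the finite-dimensional space $\mathbb{X}^*$ (functionals separate points), so $\ker \widehat{y}$ is a hyperplane through the origin. Since $0$ lies in the interior of $B_{\mathbb{X}^*}$, the slice $\ker \widehat{y} \cap B_{\mathbb{X}^*}$ is a full-dimensional convex body in $\ker \widehat{y}$ whose relative boundary is nonempty and contained in $\partial B_{\mathbb{X}^*} = \bigcup_i (G_i \cup -G_i)$. Hence $\widehat{y}$ vanishes at a point of some $G_i$ or some $-G_i$; since $\widehat{y}(-f) = -\widehat{y}(f)$, vanishing on $-G_i$ is equivalent to vanishing on $G_i$, so in either case $v_i \bot_B y$, i.e. $y \in v_i^{\bot}$. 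This yields $\bigcup_{i=1}^n v_i^{\bot} = \mathbb{X}$ and shows $\mathbb{X}$ does not have Property $P_n$.

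The step that needs the most care is the polar-duality bookkeeping: making precise that the supporting-functional face $G_i$ at a vertex $v_i$ is genuinely an $(n-1)$-dimensional facet of $B_{\mathbb{X}^*}$, and that these facets together with their antipodes cover the entire boundary with no facets left over. I would justify this by the standard vertex--facet correspondence of polar duality (the vertices of $B_{\mathbb{X}}$ are exactly $\pm v_1,\dots,\pm v_n$, and they correspond bijectively to the facets of $B_{\mathbb{X}^*}$), or more self-containedly by identifying $G_i$ with the intersection of $B_{\mathbb{X}^*}$ with the exposing hyperplane $\{f : f(v_i)=1\}$ and checking it is $(n-1)$-dimensional using that $v_i$ is extreme. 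Everything else --- the reduction to $n$ vectors via $v_i^{\bot}=(-v_i)^{\bot}$, and the elementary fact that a hyperplane through an interior point of a convex body meets its boundary --- is routine.
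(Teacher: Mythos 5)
Your proposal is correct, and it reaches the covering $\bigcup_{i=1}^{n} v_i^{\bot} = \mathbb{X}$ by a dual-side argument, whereas the paper argues on the primal side. (Both proofs, like the statement itself, implicitly assume $\dim \mathbb{X} \geq 2$: in dimension one $B_{\mathbb{X}}$ has two extreme points while $x^{\bot} = \{0\}$ for every unit vector $x$; your claim that the slice has nonempty relative boundary, and the paper's normalization of $ay+z$, both break exactly there. This degenerate case is equally outside both arguments, so it is not a defect of yours.)

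The comparison is this. The paper, given $y$, first invokes the existence theorem for Birkhoff--James orthogonality (Theorem $2.3$ of \cite{J}) to produce a unit vector $x$ with $x \bot_{B} y$, then takes a norm-one supporting functional $f$ at $x$ with $f(y)=0$ (Theorem $2.1$ of \cite{J}), observes that $f$ attains its norm at some extreme point $u_i$, so that $|f(u_i)| = 1$, and converts back through James's characterization to get $u_i \bot_{B} y$. You instead produce the norm-one functional annihilating $y$ by elementary convex geometry --- the hyperplane $\ker \widehat{y}$ passes through the center of $B_{\mathbb{X}^*}$ and so must meet $S_{\mathbb{X}^*}$ --- and you encode ``attains its norm at a vertex'' as ``lies in one of the sets $\pm G_i$,'' justified by the vertex--facet correspondence of polar duality. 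What your route buys is that it bypasses James's existence theorem entirely, which is the one nonelementary input of the paper's proof. What it costs is that the polar-duality bookkeeping you single out as the delicate step is heavier than necessary: nothing in your argument uses that $G_i$ is a facet (and, a minor slip, its dimension is $\dim \mathbb{X} - 1$, not $n-1$; the theorem's $n$ counts antipodal vertex pairs, not the dimension of $\mathbb{X}$). All you need is the covering $S_{\mathbb{X}^*} \subseteq \bigcup_{i=1}^{n} \bigl( G_i \cup (-G_i) \bigr)$, i.e.\ that every norm-one functional satisfies $|f(v_i)| = 1$ for some $i$, and this is immediate from the fact that the maximum of a linear functional over the compact convex set $B_{\mathbb{X}}$ is attained at an extreme point --- which is precisely the step the paper uses. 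Substituting that one line for the vertex--facet correspondence turns your proof into a short dual rewording of the paper's, arguably cleaner than either version as written.
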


\begin{proof}
Let us denote the extreme points of $ B_{\mathbb{X}} $ by $ \pm u_1, \pm u_2, \ldots, \pm u_n. $ We claim that $ \bigcup \limits_{i=1}^{n} u_{i}^{\bot} = \mathbb{X}. $ Let $ y \in \mathbb{X} $ be arbitrary. Given $ z \in \mathbb{X} $ there exists a scalar $ a \in \mathbb{R} $ such that $ ay+z \bot_{B} y, $ by Theorem $ 2.3 $ of \cite{J}. Take $ x= \frac{ay+z}{\|ay+z\|}, $ then $ x \bot_{B} y. $ If $ x $ is an extreme point of $ B_{\mathbb{X}}, $ then we have nothing more to show. Now, suppose that $ x $ is not an extreme point of $ B_{\mathbb{X}}. $ As $ x \bot_{B} y, $ by using Theorem $ 2.1 $ of \cite{J}, there exists a linear functional $ f \in S_{\mathbb{X}^*} $ such that $ f(x)=\|x\|=1 $ and $ f(y)=0. $ Since $ f $ attains norm, it is easy to see that there exists an extreme point $ u_i $ of $ B_{\mathbb{X}} $ such that $ |f(u_i)| = \|f\|=1. $ Therefore,  by Theorem $ 2.1 $ of \cite{J}, we have $ u_i \bot_{B} y. $ Thus $ \bigcup \limits_{i=1}^{n} u_{i}^{\bot} = \mathbb{X}. $ This completes the proof of the theorem.
\end{proof}

Our next theorem shows that we have a definitive answer for two-dimensional polyhedral Banach spaces, regarding Property $ P_n. $ To prove the theorem, we need the following lemma:

\begin{lemma}\label{lemma:orthogonal}
Let $ \mathbb{X} $ be a two-dimensional polyhedral Banach space. Then for any $ x \in E_{\mathbb{X}}, $ there exists a normal cone $ K $ of $ \mathbb{X} $ such that $ x^{\bot} = K \cup (-K). $ In addition, if the normal cone $ K $ is determined by $ v_1, v_2 \in S_{\mathbb{X}}, $ then $ \{ (1-t) v_1 + t v_2 : t \in (0,1) \} \cap y^{\bot} = \phi $ for each $ y \in E_{\mathbb{X}} \setminus \{ \pm x \}. $
\end{lemma}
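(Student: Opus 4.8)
The plan is to combine James's characterization of Birkhoff--James orthogonality (Theorem $2.1$ of \cite{J}) with the polyhedral duality between the vertices of $ B_{\mathbb{X}} $ and the edges of the dual unit ball $ B_{\mathbb{X}^*}. $ Fix $ x \in E_{\mathbb{X}}; $ since $ \mathbb{X} $ is two-dimensional and polyhedral, $ B_{\mathbb{X}} $ is a polygon and $ x $ is one of its vertices, lying on exactly two edges. I would let $ v_1, v_2 \in S_{\mathbb{X}} $ be the unit vectors in the directions of these two edges (normalized differences of $ x $ with its two neighbouring vertices) and set $ K = \{\alpha v_1 + \beta v_2 : \alpha, \beta \geq 0\}, $ the cone they determine.

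First I would identify $ x^{\bot} $ with a double cone. By James's theorem, $ x \bot_{B} y $ holds if and only if there is $ f \in S_{\mathbb{X}^*} $ with $ f(x) = 1 $ and $ f(y) = 0, $ so $ x^{\bot} = \bigcup_{f} \ker f, $ where the union runs over all $ f \in S_{\mathbb{X}^*} $ attaining norm at $ x. $ By polyhedral duality the set of such $ f $ is a closed edge $ [f_1, f_2] $ of $ B_{\mathbb{X}^*}, $ whose endpoints $ f_1, f_2 $ are the functionals supporting the two edges of $ B_{\mathbb{X}} $ through $ x; $ consequently $ \ker f_1 = \operatorname{span}(v_1) $ and $ \ker f_2 = \operatorname{span}(v_2). $ As $ f $ moves continuously along $ [f_1, f_2], $ the line $ \ker f $ rotates continuously from $ \operatorname{span}(v_1) $ to $ \operatorname{span}(v_2), $ sweeping out exactly the double cone $ K \cup (-K); $ this yields $ x^{\bot} = K \cup (-K). $ Since adjacent edges of a polygon are not parallel, $ v_1 \neq \pm v_2, $ so $ K \cap (-K) = \{\theta\} $ and $ K $ is a genuine normal cone.

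For the second assertion, fix $ y \in E_{\mathbb{X}} \setminus \{\pm x\} $ and a point $ w = (1-t)v_1 + tv_2 $ with $ t \in (0,1). $ The direction of $ w $ lies strictly between $ \operatorname{span}(v_1) $ and $ \operatorname{span}(v_2), $ hence interior to the cone, so the unique (up to sign) unit functional $ g $ with $ g(w) = 0 $ is an \emph{interior} point of the dual edge $ [f_1, f_2], $ that is, a functional exposing only the vertex $ x. $ I would then note that such a $ g $ satisfies $ |g(z)| < 1 $ for every $ z \in E_{\mathbb{X}} \setminus \{\pm x\}, $ because $ g $ attains its norm on $ B_{\mathbb{X}} $ precisely at $ \pm x. $ As $ \pm g $ are the only unit functionals with $ w $ in their kernel, James's theorem forces $ y \not\bot_{B} w, $ i.e.\ $ w \notin y^{\bot}; $ thus the open segment meets no $ y^{\bot}. $

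The main obstacle I anticipate is making the sweeping step fully rigorous: verifying that as $ f $ traverses the dual edge $ [f_1, f_2] $ the kernels $ \ker f $ cover every intermediate line and fill $ K \cup (-K) $ without gaps, while correctly pinning down which of the two arcs between $ \operatorname{span}(v_1) $ and $ \operatorname{span}(v_2) $ is traced. This needs a careful continuity and monotonicity argument for $ f \mapsto \ker f $ on the dual polygon, together with the precise correspondence between the vertex $ x $ and its dual edge. The remaining computations, including the fact that an interior supporting functional exposes a single vertex, are then routine.
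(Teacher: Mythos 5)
Your overall route---James's theorem plus the duality between the vertex $x$ and the edge $[f_1,f_2]$ of $B_{\mathbb{X}^*}$ consisting of its norming functionals, with $\ker f$ sweeping out $x^{\bot}$ as $f$ runs over that edge---is viable, and your second half is essentially the paper's own argument in dual language. But there is a concrete error in your identification of the cone $K$. You take $K$ to be generated by the normalized differences of $x$ with its two neighbouring vertices $x_1,x_2$, i.e.\ by the directions of $x-x_1$ and $x-x_2$. This is always the \emph{wrong} one of the two double cones bounded by $\operatorname{span}(v_1)$ and $\operatorname{span}(v_2)$. Indeed, since $B_{\mathbb{X}}$ lies in the supporting cone of the polygon at the vertex $x$, we have $B_{\mathbb{X}}\subseteq x+\{\alpha(x_1-x)+\beta(x_2-x):\alpha,\beta\geq 0\}$, and since $\theta\in B_{\mathbb{X}}$ this gives $-x=\alpha(x_1-x)+\beta(x_2-x)$ for some $\alpha,\beta\geq 0$, i.e.\ $x\in\{\alpha(x-x_1)+\beta(x-x_2):\alpha,\beta\geq 0\}=K$. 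But $x\notin x^{\bot}$ (take $\lambda=-1$), so $x^{\bot}=K\cup(-K)$ is impossible with your choice. Concretely, in $\mathbb{X}=\ell_{\infty}^{2}$ with $x=(1,1)$ and neighbours $(1,-1)$, $(-1,1)$, your $K$ is the first quadrant, whereas $x^{\bot}$ is the union of the second and fourth quadrants; the correct generators are, e.g., $v_1=(0,-1)$ and $v_2=(1,0)$, that is, the two edge directions taken with \emph{opposite} orientations (one pointing into $x$ along the boundary, the other pointing out of it).

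This is precisely the point you deferred at the end ("which of the two arcs is traced"), and it is not a routine detail that can be postponed: selecting the correct double cone \emph{is} the content of identifying $x^{\bot}$, since both candidate double cones have the same boundary lines $\operatorname{span}(v_1)\cup\operatorname{span}(v_2)$. The paper avoids the issue entirely by never choosing edge directions: it defines $K=f_1^{+}\cap f_2^{-}$ as an intersection of half-spaces determined by the two edge-supporting functionals $f_1,f_2$ at $x$, which pins down the correct cone automatically, and then checks that every $u$ with $x\perp_B u$ lies in $(f_1^{+}\cap f_2^{-})\cup(f_1^{-}\cap f_2^{+})$. If you either adopt that definition, or orient your edge vectors coherently with a boundary orientation of the polygon, your sweep argument can be made to work, and your second half then goes through: it is correct that an interior point $g$ of the dual edge norms no extreme point other than $\pm x$, that $\pm g$ are the only unit functionals annihilating a $w$ strictly inside the cone, and hence that $y\not\perp_B w$ for $y\in E_{\mathbb{X}}\setminus\{\pm x\}$---this is the same uniqueness-up-to-sign argument (in dimension two) that the paper uses to force $f\in\{f_1,f_2\}$ and derive a contradiction.
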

\begin{proof}
Let $ g \in S_{\mathbb{X}^*} $ and $ g(x) = \|x\|=1, $ i.e., $ g $ is a supporting functional of $ B_{\mathbb{X}} $ at $ x. $ Let $ f_{1} $ and $ f_{2} $ be the two supporting functionals corresponding to the two edges of $ S_{\mathbb{X}} $ meeting at $ x. $ Now, $ x + \ker g $ is a supporting line to $ B_{\mathbb{X}} $ at $ x, $ that lies entirely within the cone formed by the straight lines $ x + \ker f_{1} $ and $ x + \ker f_{2}. $ For $ i=1,2, $ let 
\[ f_{i}^{+} = \{ z \in \mathbb{X} : f_{i}(z) \geq 0\} ~\mbox{and}~  f_{i}^{-} = \{ z \in \mathbb{X} : f_{i}(z) \leq 0\}. \]
\begin{figure}[ht]
	\centering 
	\includegraphics[width=0.9\linewidth]{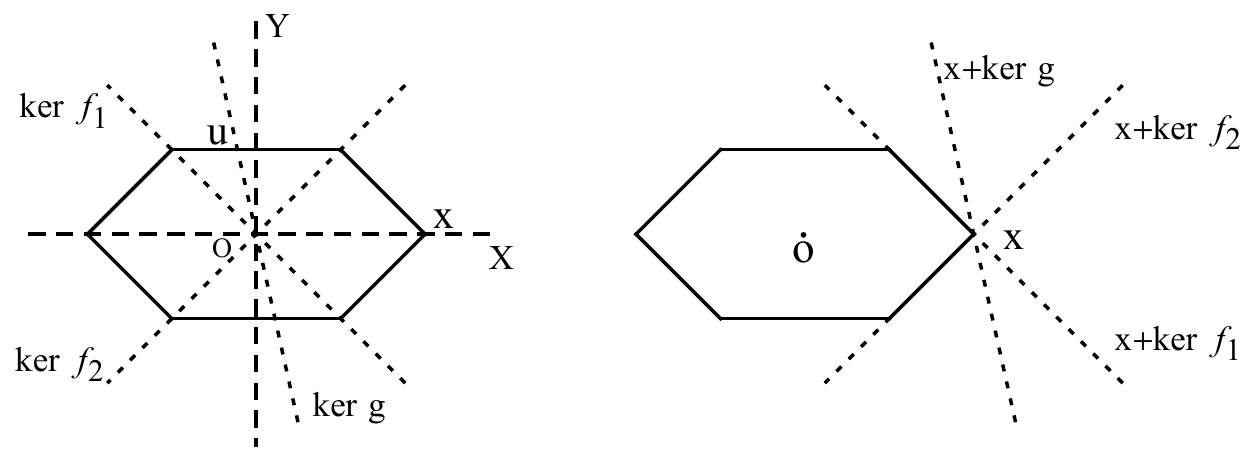}
	\caption{}
	\label{figure}
\end{figure}
We note that each $ f_{i}^{+}~(f_{i}^{-}) $ is a closed half-space in $ \mathbb{X}. $ Let $ u \in \ker g $ be arbitrary. The situation is illustrated in Figure $ 1. $ It follows immediately  that either of the following must be true:\\

$ (i) $ $ u \in f_{1}^{+} $ and $ u \in f_{2}^{-},~ (ii)~ u \in f_{1}^{-} $ and $ u \in f_{2}^{+}. $\\ 

Taking $ K = f_{1}^{+} \cap f_{2}^{-}, $ it is easy to see that $ -K = f_{1}^{-} \cap f_{2}^{+}. $ Thus for any $ u \in \mathbb{X}, $ with $ x \bot_{B} u, $ we have $ u \in K \cup (-K). $ Therefore, $ x^{\bot} = \{ w \in \mathbb{X} : x \perp_B w \} = K \cup (-K). $  This completes the proof  of the first part of the lemma.\\
 Next, suppose $ K $ is determined by $ v_{1}, v_{2} \in S_{\mathbb{X}}. $ From the construction of $ K $ it is clear that $ v_{1} \in \ker f_{1} \cap S_{\mathbb{X}} $ and $ v_{2} \in \ker f_{2} \cap S_{\mathbb{X}}. $ Let $ V = \{(1-t) v_1 + t v_2 : t \in (0,1) \}. $ We show that $ V \cap y^{\bot} = \phi, $ for each $ y \in E_{\mathbb{X}} \setminus \{ \pm x \}. $ If possible, suppose that $ V \cap y^{\bot} \neq \phi, $ for some $ y \in E_{\mathbb{X}} \setminus \{ \pm x \}. $ Then there exists $ v = (1-t) v_1 + t v_2 \in V $ such that $ v \in y^{\bot}. $  Since $ x \perp_B v, $ there exists $ f \in S_{\mathbb{X}^*} $ such that $ f(x)= \|x\|=1 $ and $ f (v) = 0. $ On the other hand, since $ y \perp_B v, $ there exists $ h \in S_{\mathbb{X}^*} $ such that $ h(y)= \|y\|= 1 $ and $ h (v) = 0. $ Since $ \mathbb{X} $ is two-dimensional, it is easy to deduce that $ f = \pm h. $  Therefore, either $ x, y $ are adjacent vertices and $ f = h $ is an extreme supporting functional corresponding to the facet $L[x,y] = \{ (1-t)x + ty : t \in [0,1] \}, $ or, $ x, -y $ are adjacent vertices and $ f = -h $ is an extreme supporting functional corresponding to the facet $L[x,-y] = \{ (1-t)x + t(-y) : t \in [0,1] \}. $ As $ f_{1} $ and $ f_{2} $ are two supporting functionals corresponding to the two edges of $ S_{\mathbb{X}} $ meeting at $ x, $ $ f $ is equal to either $ f_{1} $ or $ f_{2}. $ Therefore, $ v $ is equal to either $ v_{1} $ or $ v_{2}, $ which is a contradiction to our assumption that $ v \in V. $ This completes the proof of the lemma.	
\end{proof}
We now prove the desired theorem.
\begin{theorem}\label{th:P_(n-1)-property}
Let $ \mathbb{X} $ be a two-dimensional polyhedral Banach space such that $ B_{\mathbb{X}} $ has exactly $ 2n $ extreme points for some $ n \in \mathbb{N}. $ Then $ \mathbb{X} $ has Property $ P_{n-1}. $
\end{theorem}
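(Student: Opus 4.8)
The plan is to attach to each antipodal pair of extreme points a distinguished open arc furnished by Lemma \ref{lemma:orthogonal}, and then run a pigeonhole argument: any $n-1$ orthogonality sets can ``block'' at most $n-1$ of these $n$ arcs, leaving a free arc whose points lie outside the union. Write the $2n$ extreme points as $\pm u_1, \ldots, \pm u_n$ (note $n \geq 2$, since a $2$-dimensional polyhedral unit ball has at least four vertices). For each $i$, Lemma \ref{lemma:orthogonal} produces a normal cone $K_i$ with $u_i^{\bot} = K_i \cup (-K_i)$, determined by some $v_1^{(i)}, v_2^{(i)} \in S_{\mathbb{X}}$; set $V_i = \{(1-t)v_1^{(i)} + t v_2^{(i)} : t \in (0,1)\}$ and $W_i = V_i \cup (-V_i)$. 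The lemma guarantees $V_i \cap y^{\bot} = \phi$ for every $y \in E_{\mathbb{X}} \setminus \{\pm u_i\}$, and since $z^{\bot} = -z^{\bot}$ for all $z \in \mathbb{X}$, the same disjointness holds with $V_i$ replaced by $W_i$. Each $V_i$ is nonempty.

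The key step I would establish is that for an \emph{arbitrary} $x \in S_{\mathbb{X}}$, the set $x^{\bot}$ meets at most one of $W_1, \ldots, W_n$. Since $\mathbb{X}$ is two-dimensional and polyhedral, every $x \in S_{\mathbb{X}}$ is either an extreme point or lies in the relative interior of an edge of $B_{\mathbb{X}}$. \textbf{Extreme case:} if $x = \pm u_i$, then for each $k \neq i$ the lemma applied to $u_k$ (with $y = u_i \in E_{\mathbb{X}} \setminus \{\pm u_k\}$) gives $x^{\bot} \cap V_k = \phi$, hence $x^{\bot} \cap W_k = \phi$, so $x^{\bot}$ can only meet $W_i$. \textbf{Smooth case:} if $x$ lies in the interior of an edge with (distinct, non-antipodal) extreme endpoints $w, w'$, then $x$ is a smooth point with unique supporting functional $f$, so $x^{\bot} = \ker f$ by Theorem $2.1$ of \cite{J}, and $f$ attains its norm at both $w$ and $w'$. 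Were $x^{\bot}$ to meet some $V_i$ at a point $v$, then $w \bot_B v$ and $w' \bot_B v$; since $\{w, w'\} \neq \{u_i, -u_i\}$, at least one endpoint lies in $E_{\mathbb{X}} \setminus \{\pm u_i\}$, contradicting $V_i \cap y^{\bot} = \phi$. Thus in this case $x^{\bot}$ meets no $W_i$ at all.

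With the key step in hand, the theorem follows by counting. Given any $x_1, \ldots, x_{n-1} \in S_{\mathbb{X}}$, each $x_j^{\bot}$ meets at most one $W_i$, so $\bigcup_{j} x_j^{\bot}$ meets at most $n-1$ of the $n$ sets $W_1, \ldots, W_n$. Hence there is an index $i_0$ with $W_{i_0} \cap \bigcup_{j=1}^{n-1} x_j^{\bot} = \phi$; any $p \in V_{i_0} \subseteq W_{i_0}$ then satisfies $p \notin \bigcup_{j=1}^{n-1} x_j^{\bot}$, so this union is a proper subset of $\mathbb{X}$. As $x_1, \ldots, x_{n-1}$ were arbitrary, $\mathbb{X}$ has Property $P_{n-1}$.

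The main obstacle is the smooth case of the key step: Lemma \ref{lemma:orthogonal} speaks only about extreme points, whereas the vectors $x_j$ may be arbitrary. The crux is therefore to show that the \emph{line} orthogonality set $\ker f$ arising from a smooth point avoids every open arc $V_i$; reducing this to the two extreme endpoints of the relevant edge and invoking the disjointness clause of the lemma is the decisive move. The extreme case and the concluding pigeonhole are then routine.
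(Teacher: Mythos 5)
Your proposal is correct and follows essentially the same route as the paper: both arguments use Lemma \ref{lemma:orthogonal} to attach to each antipodal pair of extreme points an open arc that the orthogonality sets of all other extreme points avoid, and then conclude by a pigeonhole count over the $n$ arcs. The only (minor) difference is in handling non-extreme unit vectors: the paper replaces such a point by an extreme endpoint of its edge via $x^{\bot} \subseteq x_1^{\bot}$, whereas you show directly that the orthogonality line of a smooth point avoids every arc; both reductions work.
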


\begin{proof}
If $ x \in S_{\mathbb{X}} $ is a non-extreme point of $ B_{\mathbb{X}}, $ then there exist $ x_1, x_2 \in S_{\mathbb{X}} $ such that $ x = (1-t)x_1 + t x_2 $ for some $ t \in (0,1) $ and $ x_1, x_2 $ are extreme points of $ B_{\mathbb{X}}. $ It is easy to observe that $ x^{\bot} \subsetneqq x_{1}^{\bot} $ and $ x^{\bot} \subsetneqq x_{2}^{\bot}. $ Therefore, without loss of generality we may consider any $ (n-1) $ extreme points of $ B_{\mathbb{X}}, $ instead of any $ (n-1) $ points in $ S_{\mathbb{X}}, $ to prove that $ \mathbb{X} $ has Property $ P_{n-1}. $\\
Let $ E_{\mathbb{X}} = \{\pm x_1, \pm x_2, \ldots, \pm x_n \}.$  Then from Lemma \ref{lemma:orthogonal}, for each $ i \in \{1,2, \ldots, n\}, $ there exists normal cone $ K_{i} $ of $ \mathbb{X} $ such that $ x_{i}^{\bot} = K_{i} \cup (-K_{i}). $ If the normal cone $ K_{i} $ is determined by $ v_{i1}, v_{i2} \in S_{\mathbb{X}}, $ then $ \{ (1-t) v_{i1} + t v_{i2} : t \in (0,1) \} \cap\bigcup \limits_{\substack{j=1 \\j \neq i}}^{n} x_{j}^{\bot} = \phi. $ Since $ x_i $ is any extreme point of $ B_{\mathbb{X}}, $ we conclude that the union of the Birkhoff-James orthogonality sets of any $ (n-1) $ extreme points of $ B_{\mathbb{X}} $ must be a proper subset of $ \mathbb{X}. $ However, this is clearly equivalent to the fact that $ \mathbb{X} $ has Property $ P_{n-1}. $ This establishes the theorem.  
\end{proof}

As an application of Corollary \ref{cor:BS-n-dim}, we next give an example of a bounded linear operator $ T $ between a three-dimensional polyhedral Banach space $ \mathbb{X} $ and a two-dimensional polyhedral Banach space $ \mathbb{Y} $ such that $ T $ does not satisfy the B\v{S} Property. 

\begin{example} \label{example-3}
	Let $ \mathbb{X} = \ell_{\infty}^{3} $ and let $ \mathbb{Y} $ be a two-dimensional real polyhedral Banach space such that $ S_{\mathbb{Y}} $ is regular decagon with vertices $ (\cos\frac{j\pi}{5}, \sin\frac{j\pi}{5}), $ $ j \in \{0,1,2, \ldots, 9\}. $ Consider the linear operator $ T : \mathbb{X} \to \mathbb{Y}, $ defined by 
	$$ T (x,y,z)= \Big(\frac{x+y}{2}+ \frac{(y-x)\cos\frac{2\pi}{5}}{2}, \frac{(y-x)\sin\frac{2\pi}{5}}{2} \Big) . $$
It is easy to check that $ \|T\|=1, $ $M_T= \{ \pm(1,1,z), \pm(-1,1,z) : z \in [-1,1] \} $ and $ T(M_T) = \{ \pm(1,0), \pm(\cos\frac{2\pi}{5}, \sin\frac{2\pi}{5}) \}. $ Consider $ x_1= (1,1,1), x_2= (-1,1,1) $ and $ x_3=(-1,-1,1). $ Clearly $ \{x_1,x_2,x_3\} $ forms a basis of $ \mathbb{X}. $ If we choose $ \alpha= -10 $ and $ \beta= \frac{-3}{2}, $ then condition (c) of Corollary \ref{cor:BS-n-dim} is satisfied. From Theorem \ref{th:P_(n-1)-property}, we know that $ \mathbb{Y} $ has Property $ P_4. $ Therefore, by using Corollary \ref{cor:BS-n-dim}, we conclude that $ T $ does not satisfy the B\v{S} Property.
\end{example}

For any two Banach spaces $ \mathbb{X}, \mathbb{Y}, $ it is easy to see that $ \mathbb{X} \times \mathbb{Y}, $ equipped with the norm $ \|(x,y)\|= \max \{ \|x\|, \|y\| \} $ for all $ (x,y) \in \mathbb{X} \times \mathbb{Y}, $ is a Banach space. Let us denote this space by $\mathbb{X} \oplus_{\infty} \mathbb{Y}. $ Similarly, $ \mathbb{X} \times \mathbb{Y}, $ equipped with the norm $ \|(x,y)\|=  \|x\|+ \|y\|  $ for all $ (x,y) \in \mathbb{X} \times \mathbb{Y}, $ is a Banach space which is denoted by $\mathbb{X} \oplus_{1} \mathbb{Y}. $ In the following theorems, we study Property $ P_n $ for Banach spaces $\mathbb{X} \oplus_{\infty} \mathbb{Y} $ and $\mathbb{X} \oplus_{1} \mathbb{Y}, $ where $ \mathbb{X} $ is a polyhedral Banach space.

\begin{theorem} \label{th:infinity-sum}
Let $ \mathbb{X} $ be a polyhedral Banach space such that $ \mathbb{X} $ does not have Property $P_n,$ for some $ n \in \mathbb{N}. $ Then $\mathbb{X} \oplus_{\infty} \mathbb{Y} $ does not have Property $ P_n, $ for any Banach space $ \mathbb{Y}. $ Moreover, if $ \mathbb{Y} $ is a polyhedral Banach space such that $ \mathbb{Y} $ does not have Property $ P_m, $ for some $ m \in \mathbb{N}, $ then $\mathbb{X} \oplus_{\infty} \mathbb{Y} $ does not have Property $ P_r, $ where $ r = \min \{ m,n \}. $ 
\end{theorem}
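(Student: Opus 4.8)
The plan is to prove the first assertion by writing down an explicit family of $n$ unit vectors in $\mathbb{X} \oplus_\infty \mathbb{Y}$ whose Birkhoff--James orthogonality sets already exhaust the whole space, and then to deduce the ``moreover'' statement purely from symmetry. Since $\mathbb{X}$ does not have Property $P_n$, I may fix $x_1, \dots, x_n \in S_{\mathbb{X}}$ with $\bigcup_{i=1}^n x_i^{\bot} = \mathbb{X}$. The vectors I would use are $(x_1,0), \dots, (x_n,0)$; each lies on $S_{\mathbb{X} \oplus_\infty \mathbb{Y}}$ because $\|(x_i,0)\| = \max\{\|x_i\|,0\} = 1$.

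The heart of the argument is the identity
\[ (x,0)^{\bot} = x^{\bot} \times \mathbb{Y} \qquad \text{for every } x \in S_{\mathbb{X}}. \]
First I would prove $\supseteq$: if $x \bot_{B} u$ then $\|x + \lambda u\| \ge 1$ for all $\lambda$, so $\|(x,0)+\lambda(u,v)\| = \max\{\|x+\lambda u\|, |\lambda|\,\|v\|\} \ge 1$ for every $v \in \mathbb{Y}$, giving $(x,0) \bot_{B} (u,v)$. For the reverse inclusion I would argue contrapositively: if $x \not\bot_{B} u$, the convex function $\varphi(\lambda) = \|x+\lambda u\|$ has $\varphi(0)=1$ but is not minimized at $0$, so there is $\lambda_0 \neq 0$ with $\varphi(\lambda_0) < 1$, and then $\varphi(s\lambda_0) \le (1-s)\varphi(0) + s\varphi(\lambda_0) < 1$ for all $s \in (0,1]$. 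Choosing $s$ small enough that the $\mathbb{Y}$-coordinate $|s\lambda_0|\,\|v\|$ also stays below $1$ yields $\|(x,0)+s\lambda_0(u,v)\| < 1$, so $(x,0) \not\bot_{B} (u,v)$. Hence membership in $(x,0)^{\bot}$ forces $u \in x^{\bot}$, which establishes the identity.

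Granting this, $\bigcup_{i=1}^n (x_i,0)^{\bot} = \big(\bigcup_{i=1}^n x_i^{\bot}\big) \times \mathbb{Y} = \mathbb{X} \times \mathbb{Y}$, so the vectors $(x_1,0),\dots,(x_n,0)$ witness that $\mathbb{X} \oplus_\infty \mathbb{Y}$ does not have Property $P_n$; I note that polyhedrality of $\mathbb{X}$ is not actually used here, only the failure of $P_n$. For the ``moreover'' part I would use that $(x,y) \mapsto (y,x)$ is an isometric isomorphism $\mathbb{X} \oplus_\infty \mathbb{Y} \cong \mathbb{Y} \oplus_\infty \mathbb{X}$, so the first assertion applies in either coordinate. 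If $r = \min\{m,n\} = n$, the first assertion applied to $\mathbb{X}$ gives failure of $P_n = P_r$; if $r = m$, the same assertion applied to $\mathbb{Y} \oplus_\infty \mathbb{X}$ (using that $\mathbb{Y}$ fails $P_m$) together with the isometry gives failure of $P_m = P_r$. Either way $\mathbb{X} \oplus_\infty \mathbb{Y}$ does not have Property $P_r$.

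The main obstacle is the reverse inclusion in the identity $(x,0)^{\bot} = x^{\bot} \times \mathbb{Y}$: one must confirm that the second coordinate $v$ can never ``repair'' orthogonality once $x \not\bot_{B} u$ in $\mathbb{X}$. This is precisely where taking $\lambda$ small matters, since it keeps the $\mathbb{Y}$-contribution $|\lambda|\,\|v\|$ harmless while the $\mathbb{X}$-contribution $\|x+\lambda u\|$ has already dropped below $1$. Everything after this point is the routine computation of a union of products together with the symmetry of the $\ell_\infty$-sum.
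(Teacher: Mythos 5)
Your proposal is correct and follows essentially the same route as the paper: the same witness vectors $(x_1,0),\dots,(x_n,0)$ and the same key inclusion $x_i^{\bot}\times\mathbb{Y}\subseteq (x_i,0)^{\bot}$, with the ``moreover'' part handled by the coordinate-swap symmetry that the paper leaves implicit (``as before''). The only difference is that you also prove the reverse inclusion $(x_i,0)^{\bot}\subseteq x_i^{\bot}\times\mathbb{Y}$, which is correct but superfluous since the sets $x_i^{\bot}\times\mathbb{Y}$ already cover $\mathbb{X}\oplus_{\infty}\mathbb{Y}$; your observation that polyhedrality of $\mathbb{X}$ is never actually used is also accurate.
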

\begin{proof}
As $ \mathbb{X} $ does not have Property $ P_n, $ there exist $ x_1, x_2, \ldots, x_n \in S_{\mathbb{X}} $ such that $ \bigcup \limits_{i=1}^{n} x_{i}^{\bot} = \mathbb{X}. $ Now we claim that $ \bigcup \limits_{i=1}^{n} (x_{i}, 0)^{\bot} = \mathbb{X} \oplus_{\infty} \mathbb{Y}. $\\
Let us first show that $ x_{i}^{\bot} \times \mathbb{Y} \subseteq (x_i,0)^{\bot}, $ for each $ i \in \{1,2, \ldots, n \}. $ Let $ (x,y) \in x_{i}^{\bot} \times \mathbb{Y}. $ Then for any scalar $ \lambda, $
\begin{eqnarray*}
\|(x_i,0) + \lambda (x,y)\| & = & \|(x_i + \lambda x, \lambda y) \| \\
                            & = & \max \{ \|x_i + \lambda x\|, \|\lambda y\| \} \\
														& \geq & \|x_i + \lambda x\| \\
														& \geq & \|x_i\| = \|(x_i,0)\|,
\end{eqnarray*} 
as $ x \in x_{i}^{\bot}. $ Therefore, $ (x,y) \in (x_i,0)^{\bot}. $ Hence $ x_{i}^{\bot} \times \mathbb{Y} \subseteq (x_i,0)^{\bot}. $\\
Therefore, $ \bigcup \limits_{i=1}^{n} (x_{i}, 0)^{\bot} \supseteq \bigcup \limits_{i=1}^{n} (x_{i}^{\bot} \times \mathbb{Y}) = \mathbb{X} \oplus_{\infty} \mathbb{Y}. $ Hence $ \bigcup \limits_{i=1}^{n} (x_{i}, 0)^{\bot} = \mathbb{X} \oplus_{\infty} \mathbb{Y}. $\\
Further if $ \mathbb{Y} $ does not have Property $ P_m, $ then as before we can show that $\mathbb{X} \oplus_{\infty} \mathbb{Y} $ does not possess Property $ P_m. $ Thus $\mathbb{X} \oplus_{\infty} \mathbb{Y}$ does not have Property $ P_r, $ where $ r = \min \{ m, n \}. $ This completes the proof of the theorem.
\end{proof}
\begin{cor} \label{cor:l_infity}
Let $ \mathbb{X}= \ell^{n}_{\infty}, $ for any $ n ~(\geq 2) \in \mathbb{N}. $ Then $ \mathbb{X} $ does not have Property $ P_2. $  
\end{cor}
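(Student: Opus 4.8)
The plan is to reduce everything to the two-dimensional case $\ell^2_{\infty}$ and then propagate the failure of Property $P_2$ through the $\ell_{\infty}$-sum structure via Theorem \ref{th:infinity-sum}. First recall that saying $\mathbb{X}$ does not have Property $P_2$ is precisely the assertion that there exist two unit vectors $x_1, x_2 \in S_{\mathbb{X}}$ with $x_1^{\bot} \cup x_2^{\bot} = \mathbb{X}$. So the whole task is to produce such a pair in $\ell^n_{\infty}$.

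First I would settle the base case $n=2$. The unit ball of $\ell^2_{\infty}$ is the square $[-1,1]^2$, which is a polyhedron with exactly four extreme points $(\pm 1, \pm 1)$. Writing $4 = 2\cdot 2$ and applying Theorem \ref{th:P_n-property} with the index equal to $2$ immediately yields that $\ell^2_{\infty}$ does not have Property $P_2$. (If an explicit witness is preferred, the vectors $(1,1)$ and $(1,-1)$ work: a short computation shows that their Birkhoff--James orthogonality sets are the two complementary pairs of opposite closed quadrants, whose union is all of $\mathbb{R}^2$.)

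For $n \geq 3$ I would invoke the isometric identification $\ell^n_{\infty} = \ell^2_{\infty} \oplus_{\infty} \ell^{n-2}_{\infty}$, which is valid because $\max\{|t_1|,\ldots,|t_n|\} = \max\{\max\{|t_1|,|t_2|\},\, \max\{|t_3|,\ldots,|t_n|\}\}$. Taking $\mathbb{X} = \ell^2_{\infty}$, which is polyhedral and, by the base case, fails Property $P_2$, together with $\mathbb{Y} = \ell^{n-2}_{\infty}$ an arbitrary Banach space, the first assertion of Theorem \ref{th:infinity-sum} gives at once that $\ell^2_{\infty} \oplus_{\infty} \ell^{n-2}_{\infty} = \ell^n_{\infty}$ fails Property $P_2$. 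This covers all $n \geq 2$, with the witnessing vectors being $(1,1,0,\ldots,0)$ and $(1,-1,0,\ldots,0)$.

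I do not anticipate a genuine obstacle: the argument is a direct application of two already-established results. The only points requiring care are the bookkeeping of the isometric splitting $\ell^n_{\infty} = \ell^2_{\infty} \oplus_{\infty} \ell^{n-2}_{\infty}$ and handling $n=2$ (where the second summand degenerates) separately as the base case rather than through the sum theorem. The mild subtlety worth flagging is that Theorem \ref{th:infinity-sum} requires the first summand to be polyhedral, which is exactly why I would place $\ell^2_{\infty}$ in the $\mathbb{X}$-slot and relegate $\ell^{n-2}_{\infty}$ to the unrestricted $\mathbb{Y}$-slot.
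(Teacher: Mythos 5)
Your proposal is correct and takes essentially the same route as the paper: establish the base case by applying Theorem \ref{th:P_n-property} to $\ell^{2}_{\infty}$ (whose unit ball has exactly $4 = 2\cdot 2$ extreme points), then propagate the failure of Property $P_2$ using Theorem \ref{th:infinity-sum}. The only cosmetic difference is that the paper iterates the decomposition $\ell^{k+1}_{\infty} = \ell^{k}_{\infty} \oplus_{\infty} \mathbb{R}$ inductively, whereas you split off $\ell^{n}_{\infty} = \ell^{2}_{\infty} \oplus_{\infty} \ell^{n-2}_{\infty}$ in a single application, which is equally valid since $\ell^{2}_{\infty}$ occupies the polyhedral slot.
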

\begin{proof}
From Theorem \ref{th:P_n-property}, it follows that $ \ell^{2}_{\infty} $  does not have Property $ P_2. $ Also, we know that $ \ell^{3}_{\infty} = \ell^{2}_{\infty} \oplus_{\infty} \mathbb{R}. $ Therefore, by using Theorem \ref{th:infinity-sum}, we conclude that $ \ell^{3}_{\infty} $ does not have Property $ P_2. $ Continuation of this argument proves that $ \ell^{n}_{\infty}, $ for any $ n ~(\geq 2) \in \mathbb{N} $ does not have Property $ P_2. $
\end{proof}

Applying similar arguments, the proofs of the following results are now apparent: 

\begin{theorem} \label{th:one-sum}
Let $ \mathbb{X} $ be a polyhedral Banach space such that $ \mathbb{X} $ does not have Property $P_n,$ for some $ n \in \mathbb{N}. $ Then $\mathbb{X} \oplus_{1} \mathbb{Y} $ does not have Property $ P_n, $ for any Banach space $ \mathbb{Y}. $ Moreover, if $ \mathbb{Y} $ is a polyhedral Banach space such that $ \mathbb{Y} $ does not have Property $ P_m, $ for some $ m \in \mathbb{N}, $ then $\mathbb{X} \oplus_{1} \mathbb{Y} $ does not have Property $ P_r, $ where $ r = \min \{ m,n \}. $ 
\end{theorem}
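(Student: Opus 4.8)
The plan is to transcribe the proof of Theorem \ref{th:infinity-sum} almost verbatim, replacing the $ \ell_\infty $-norm estimate by the corresponding $ \ell_1 $-norm estimate. The only property of the sum norm that the earlier argument genuinely uses is that the norm of the first coordinate is dominated by the norm of the whole pair, together with $ \|(x_i,0)\| = \|x_i\| $; both of these persist for $ \oplus_1 $ since $ \|u\| + \|v\| \geq \|u\| $ and $ \|x_i\| + 0 = \|x_i\| $.

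For the first assertion, since $ \mathbb{X} $ does not have Property $ P_n, $ I would fix $ x_1, x_2, \ldots, x_n \in S_{\mathbb{X}} $ with $ \bigcup \limits_{i=1}^{n} x_{i}^{\bot} = \mathbb{X}, $ and consider the unit vectors $ (x_i, 0) \in S_{\mathbb{X} \oplus_1 \mathbb{Y}}. $ The key step is to show $ x_{i}^{\bot} \times \mathbb{Y} \subseteq (x_i, 0)^{\bot} $ for each $ i. $ For $ (x,y) \in x_{i}^{\bot} \times \mathbb{Y} $ and any scalar $ \lambda, $ I would compute
\[ \|(x_i,0) + \lambda(x,y)\| = \|x_i + \lambda x\| + \|\lambda y\| \geq \|x_i + \lambda x\| \geq \|x_i\| = \|(x_i,0)\|, \]
where the last inequality uses $ x \in x_{i}^{\bot}. $ Hence $ (x,y) \in (x_i,0)^{\bot}. $ Taking the union over $ i $ then yields $ \bigcup \limits_{i=1}^{n} (x_i,0)^{\bot} \supseteq \bigcup \limits_{i=1}^{n} (x_{i}^{\bot} \times \mathbb{Y}) = \left( \bigcup \limits_{i=1}^{n} x_{i}^{\bot} \right) \times \mathbb{Y} = \mathbb{X} \oplus_1 \mathbb{Y}, $ so $ \mathbb{X} \oplus_1 \mathbb{Y} $ does not have Property $ P_n. $

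For the moreover part, when $ \mathbb{Y} $ is polyhedral and fails Property $ P_m, $ I would run the symmetric argument with witnessing vectors $ y_1, \ldots, y_m \in S_{\mathbb{Y}} $ satisfying $ \bigcup \limits_{j=1}^{m} y_{j}^{\bot} = \mathbb{Y}, $ using the estimate $ \|(0, y_j) + \lambda(x,y)\| = \|\lambda x\| + \|y_j + \lambda y\| \geq \|y_j + \lambda y\| \geq \|y_j\| $ to obtain $ \mathbb{X} \times y_{j}^{\bot} \subseteq (0, y_j)^{\bot}, $ and hence that $ \mathbb{X} \oplus_1 \mathbb{Y} $ also fails Property $ P_m. $ Since $ r = \min\{m,n\} $ equals one of $ m $ or $ n, $ and I will have shown that $ \mathbb{X} \oplus_1 \mathbb{Y} $ fails both Property $ P_n $ and Property $ P_m, $ it follows at once that $ \mathbb{X} \oplus_1 \mathbb{Y} $ does not have Property $ P_r. $

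I do not expect a genuine obstacle here. The argument is a direct transcription of the $ \oplus_\infty $ case, and the only line that changes is the norm computation, where the monotonicity $ \|u\| + \|v\| \geq \|u\| $ of the $ \ell_1 $-sum plays exactly the role that $ \max\{\|u\|, \|v\|\} \geq \|u\| $ played before. In particular, the polyhedrality of $ \mathbb{X} $ enters only through the hypothesis that Property $ P_n $ can fail (as guaranteed by the earlier results); it is not used in the orthogonality estimate itself.
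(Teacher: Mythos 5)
Your proposal is correct and is precisely the argument the paper intends: the paper itself gives no separate proof of this theorem, stating only that it follows by ``applying similar arguments'' to the $\oplus_\infty$ case, and your transcription with the estimate $\|x_i+\lambda x\|+\|\lambda y\|\geq\|x_i+\lambda x\|\geq\|x_i\|$ is exactly that adaptation. The concluding step for $P_r$ is also sound, since $r=\min\{m,n\}$ is one of $m,n$ and you have shown the sum fails both $P_n$ and $P_m$.
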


\begin{cor}
Let $ \mathbb{X}= \ell^{n}_{1}, $ where $ n (\geq 2) \in \mathbb{N}. $ Then $ \mathbb{X} $ does not have Property $ P_2. $  
\end{cor}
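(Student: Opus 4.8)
The plan is to replicate, mutatis mutandis, the argument used for Corollary \ref{cor:l_infity}, substituting the $\ell_1$-sum for the $\ell_\infty$-sum and invoking Theorem \ref{th:one-sum} in place of Theorem \ref{th:infinity-sum}. The base case is $\ell^2_1$. Its unit ball is the diamond whose extreme points are precisely $(\pm 1, 0)$ and $(0, \pm 1)$, so $B_{\ell^2_1}$ has exactly $4 = 2\cdot 2$ extreme points. Applying Theorem \ref{th:P_n-property} with $n=2$, I conclude immediately that $\ell^2_1$ does not have Property $P_2$.

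Next I would set up the inductive step. The key structural observation is that the $\ell_1$-norm decomposes coordinatewise, so that for every $k \geq 2$ one has the isometric identification $\ell^{k}_1 = \ell^{k-1}_1 \oplus_1 \mathbb{R}$, which follows directly from the definition $\|(x,t)\| = \|x\|_1 + |t|$. Since $\ell^{k-1}_1$ is a finite-dimensional polyhedral Banach space, Theorem \ref{th:one-sum} applies: whenever $\ell^{k-1}_1$ fails to have Property $P_2$, so does $\ell^{k-1}_1 \oplus_1 \mathbb{R} = \ell^{k}_1$, and here the co-domain factor $\mathbb{R}$ may be taken to be an arbitrary Banach space, so no further hypotheses on it are needed. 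Starting from the base case $\ell^2_1$ and iterating this implication, I obtain by induction on $n$ that $\ell^n_1$ does not have Property $P_2$ for every $n \geq 2$.

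I do not anticipate any genuine obstacle here, since both ingredients are already in hand: Theorem \ref{th:P_n-property} supplies the base case, and Theorem \ref{th:one-sum} supplies the inductive propagation. The only points requiring a line of verification are the count of extreme points of $B_{\ell^2_1}$ and the isometric splitting $\ell^{k}_1 = \ell^{k-1}_1 \oplus_1 \mathbb{R}$, both of which are routine consequences of the definition of the $\ell_1$-norm. Thus the proof is a short two-step argument — establish the base case via Theorem \ref{th:P_n-property}, then propagate via Theorem \ref{th:one-sum} — exactly parallel to the proof of Corollary \ref{cor:l_infity}.
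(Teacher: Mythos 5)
Your proposal is correct and is exactly the argument the paper intends: the paper gives no separate proof of this corollary, stating only that it follows by ``similar arguments'' to Corollary~1 (the $\ell^n_\infty$ case), which is precisely your scheme of establishing the base case $\ell^2_1$ via Theorem~2.4 (four extreme points) and propagating through $\ell^k_1 = \ell^{k-1}_1 \oplus_1 \mathbb{R}$ via Theorem~2.7. No gaps; the two verifications you flag (the extreme-point count and the isometric splitting) are indeed routine.
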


Let $ \mathbb{X} $ be a three-dimensional polyhedral Banach space such that  $ B_{\mathbb{X}} $ is a prism with vertices $(\cos\frac{j\pi}{n}, \sin\frac{j\pi}{n},\pm{1}),$  $j \in \{0,1,2,\ldots,2n-1\}, $ $ n \geq 2. $ Then it is trivial to see that $ \mathbb{X}= \mathbb{Y} \oplus_{\infty} \mathbb{R}, $ where $ \mathbb{Y} $ is a two-dimensional polyhedral Banach space so that the extreme points of $ B_{\mathbb{Y}} $ are given by $(\cos\frac{j\pi}{n}, \sin\frac{j\pi}{n}),$  $j \in \{0,1,2,\ldots,2n-1\}, $ $ n \geq 2. $ Therefore, by using Theorem \ref{th:P_n-property} and Theorem \ref{th:infinity-sum}, we can conclude that $ \mathbb{X} $ does not have Property $ P_n. $ However, in the next theorem we show that $ \mathbb{X} $ does not have Property $ P_2, $ for any $ n \geq 2. $

\begin{theorem} \label{th:prism}
Let $ \mathbb{X} $ be a three-dimensional polyhedral Banach space such that  $ B_{\mathbb{X}} $ is a prism with vertices $(\cos\frac{j\pi}{n}, \sin\frac{j\pi}{n},\pm{1}),$  $j \in \{0,1,2,\ldots,2n-1\}, $ $ n \geq 2. $ Then $ \mathbb{X} $ does not have Property $ P_2. $
\end{theorem}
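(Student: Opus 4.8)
The plan is to negate Property $P_2$ directly: I will produce two unit vectors $x_1, x_2 \in S_{\mathbb{X}}$ for which $x_1^{\bot} \cup x_2^{\bot} = \mathbb{X}$. Writing coordinates so that $B_{\mathbb{X}} = Q \times [-1,1]$ with $Q$ the regular $2n$-gon having vertices $P_j = (\cos\frac{j\pi}{n}, \sin\frac{j\pi}{n})$, the natural candidates are the two vertices lying over a single polygon-vertex $P_0 = (1,0)$, namely $x_1 = (1,0,1)$ and $x_2 = (1,0,-1)$. Both are extreme points of $B_{\mathbb{X}}$ and clearly lie on $S_{\mathbb{X}}$. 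The guiding heuristic is that, since the prism splits as $\mathbb{Y}\oplus_{\infty}\mathbb{R}$, its side facets are vertical; hence $x_1$ and $x_2$ are supported by exactly the \emph{same} two side-facet functionals and differ only in whether the horizontal top or bottom facet is used. This coincidence of the side supporting functionals is precisely what will let two vectors suffice.

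First I would record the supporting functionals. The facets of $B_{\mathbb{X}}$ through $x_1$ are the top facet, with supporting functional $f_{\mathrm{top}}(u) = u_3$, and the two side facets meeting at $P_0$, with supporting functionals $h_1, h_2 \in S_{\mathbb{X}^*}$ (the outward normals to the edges $P_0 P_1$ and $P_{2n-1}P_0$), each having vanishing third coordinate. By standard polytope geometry the functionals supporting $B_{\mathbb{X}}$ at the vertex $x_1$ are exactly the (normalized) nonnegative combinations $a f_{\mathrm{top}} + b h_1 + c h_2$ with $a,b,c \ge 0$; at $x_2$ the top facet is replaced by the bottom facet, so the supporting functionals are the nonnegative combinations of $-f_{\mathrm{top}}, h_1, h_2$.

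The key step is the translation of orthogonality into a sign condition. By James' characterization (Theorem $2.1$ of \cite{J}), $x_1 \perp_B y$ if and only if some supporting functional of $x_1$ annihilates $y$; and a nonnegative combination of $f_{\mathrm{top}}, h_1, h_2$ can vanish at $y$ with coefficients not all zero if and only if the three scalars $f_{\mathrm{top}}(y) = y_3$, $h_1(y)$, $h_2(y)$ are not all of the same strict sign. I would verify this elementary fact (if all three are strictly positive or all strictly negative, every nonnegative combination is nonzero; otherwise either one scalar is zero, used alone, or two have opposite signs, and a vanishing combination is written down explicitly). This gives
\[
x_1 \perp_B y \iff \{y_3,\, h_1(y),\, h_2(y)\} \text{ are not all of one strict sign},
\]
and likewise $x_2 \perp_B y$ if and only if $\{-y_3,\, h_1(y),\, h_2(y)\}$ are not all of one strict sign.

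Finally I would conclude by contradiction. If some $y$ lay in neither $x_1^{\bot}$ nor $x_2^{\bot}$, then $y_3, h_1(y), h_2(y)$ would all share one strict sign and $-y_3, h_1(y), h_2(y)$ would all share one strict sign; the first forces $h_1(y), h_2(y)$ to carry the sign of $y_3$, the second the sign of $-y_3$, which is impossible since $y_3$ and $-y_3$ cannot be simultaneously nonzero of the same sign. Hence every $y \in \mathbb{X}$ is orthogonal to $x_1$ or to $x_2$, so $x_1^{\bot}\cup x_2^{\bot} = \mathbb{X}$ and $\mathbb{X}$ fails Property $P_2$. The main obstacle is purely the first bookkeeping step: correctly identifying the normal cones at the two chosen vertices and justifying that the side facets, hence the functionals $h_1, h_2$, are common to both. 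Once that is in place the sign argument is immediate and, notably, independent of $n$, which is exactly what the statement requires.
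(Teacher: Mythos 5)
Your proposal is correct, but it takes a genuinely different route from the paper, both in the choice of vectors and in the method. The paper picks the two antipodal top vertices $v_1=(1,0,1)$ and $v_{n+1}=(-1,0,1)$, writes down the explicit (trigonometric) formula for the norm on $\mathbb{X}$, and verifies directly, by case analysis on the sign of $\lambda$ and homogeneity, that $v_1^{\bot}$ contains $\{(x,y,z): xz\le 0\}$ and $v_{n+1}^{\bot}$ contains $\{(x,y,z): xz\ge 0\}$, so the union is everything. You instead pick the two vertices $(1,0,\pm 1)$ stacked over a single polygon vertex and argue entirely in the dual: the normal cone at each vertex is the conical hull of the three adjacent facet functionals, the two vertical side functionals $h_1,h_2$ are \emph{shared} by both vertices while the horizontal ones are $\pm f_{\mathrm{top}}$, and then James' characterization reduces everything to the observation that $\{y_3,h_1(y),h_2(y)\}$ and $\{-y_3,h_1(y),h_2(y)\}$ cannot both consist of scalars of one strict sign. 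Your sign lemma (a nonnegative, not-all-zero combination of functionals each positive at the vertex can annihilate $y$ iff their values at $y$ are not all of one strict sign) is stated and checked correctly, and the concluding contradiction is sound. What each approach buys: the paper's computation is self-contained and needs nothing beyond the norm formula, but is tied to that formula; your argument never touches the trigonometry, is visibly independent of $n$, and in fact proves the stronger statement that \emph{any} prism $\mathbb{Y}\oplus_\infty\mathbb{R}$ over a two-dimensional polyhedral space fails Property $P_2$ (regularity of the $2n$-gon plays no role). The one step you wave at as ``standard polytope geometry'' --- that the norm-one supporting functionals at a vertex are exactly the convex combinations of the adjacent facet functionals --- is precisely Lemma $2.1$ of \cite{SPBB}, which the paper itself invokes in the proof of Theorem \ref{th:prism-pyramid-general}; citing it (or noting that each facet functional has norm one and value one at the vertex, so a convex combination again has norm one) closes that gap, and indeed your method is essentially the paper's own later technique applied to this simpler body.
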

\begin{proof}
Let the vertices of $B_{\mathbb{X}}$ be  $v_{\pm(j+1)}$, $j \in \{0,1,2,\ldots,2n-1\}, $ where $v_{\pm(j+1)}=(\cos\frac{j\pi}{n},\sin\frac{j\pi}{n},\pm{1}).$ The unit sphere $S_{\mathbb{X}}$ is shown in Figure $2.$\\
\begin{figure}[ht]
\centering 
\includegraphics[width=0.5\linewidth]{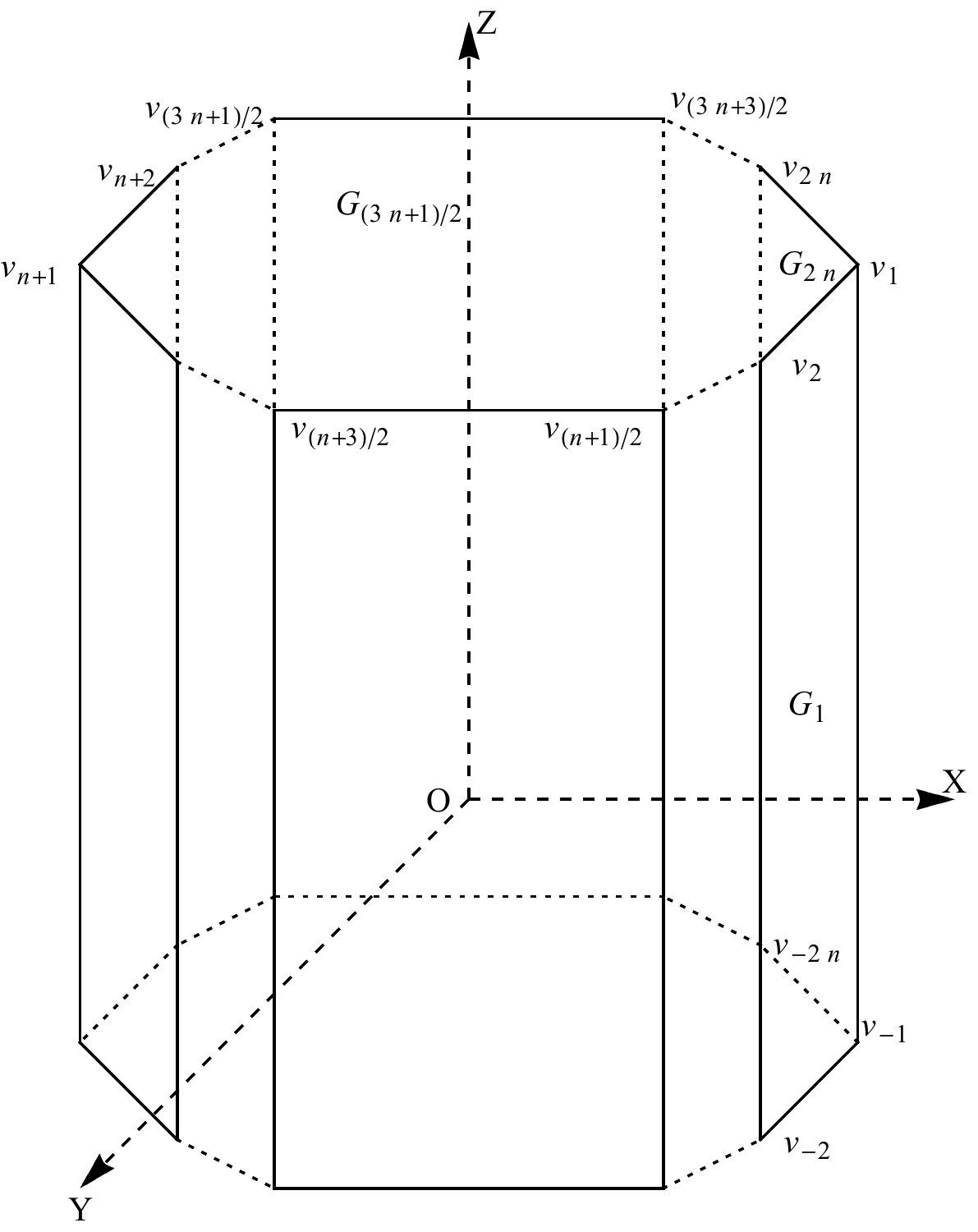}
\caption{}
\label{figure}
\end{figure}
A simple computation reveals the explicit expression for the norm function on $ \mathbb{X}. $ Given any $ (x,y,z) \in \mathbb{X}, $ we have,
 \[ \|(x,y,z)\|= \max \limits_{0 \leq j \leq 2n-1} \left\{ \frac{|\cos \frac{(2j+1) \pi}{2n}| |x|}{\cos \frac{\pi}{2n}} +\frac{|\sin \frac{(2j+1) \pi}{2n}| |y|}{\cos \frac{\pi}{2n}}, |z|  \right\}. \]
We claim that $ v_{1}^{\bot} \cup v_{n+1}^{\bot} = \mathbb{X}. $ Let $ (x,y,z) \in \mathbb{X} $ be such that $ x \geq 0, z \leq 0 $ and $ y $ is arbitrary. Now, for any scalar $ \lambda \geq 0, $
\begin{eqnarray*}
    \|(1,0,1)+\lambda (x,y,z)\| & = & \| (1 +\lambda x, \lambda y, 1+ \lambda z) \| \\
		                            & = & \max \limits_{0 \leq j \leq 2n-1} \Big\{ \frac{|\cos \frac{(2j+1) \pi}{2n}| |1+ \lambda x|}{\cos \frac{\pi}{2n}} +\frac{|\sin \frac{(2j+1) \pi}{2n}| |\lambda y|}{\cos \frac{\pi}{2n}}, \\
																& & \hspace{1.7 cm} |1 + \lambda z|  \Big\} \\
																& \geq & |1+ \lambda x| + \tan \frac{\pi}{2n} |\lambda y|	\\			
										           & \geq & 1 = \|(1,0,1)\|.
\end{eqnarray*}
 Also, for any scalar $ \lambda \leq 0, $
\begin{eqnarray*}
   \|(1,0,1)+\lambda (x,y,z)\| & = & \| (1 +\lambda x, \lambda y, 1+ \lambda z) \| \\
		                            & = & \max_{0 \leq j \leq 2n-1} \Big\{ \frac{|\cos \frac{(2j+1) \pi}{2n}| |1+ \lambda x|}{\cos \frac{\pi}{2n}} +\frac{|\sin \frac{(2j+1) \pi}{2n}| |\lambda y|}{\cos \frac{\pi}{2n}},  \\
																& & \hspace{1.7 cm} |1 + \lambda z| \Big \} \\
																& \geq & |1+ \lambda z|	\\			
										           & \geq & 1 = \|(1,0,1)\|.
\end{eqnarray*}

Therefore, $ (1,0,1) \bot_{B} (x,y,z), $ for all $ x \geq 0, z \leq 0 $ and for any $ y. $ From the homogeneity property of Birkhoff-James orthogonality, it follows that  $ (1,0,1) \bot_{B} (x,y,z), $ for all $ x \leq 0, z \geq 0 $ and for any $ y. $\\
 Let $ (x,y,z) \in \mathbb{X} $ be such that $ x \geq 0, z \geq 0 $. Now for any $ \lambda \geq 0, $
\begin{eqnarray*}
    \|(-1,0,1)+\lambda (x,y,z)\| & = & \| (-1 +\lambda x, \lambda y, 1+ \lambda z) \| \\
		                            & = & \max \limits_{0 \leq j \leq 2n-1} \Big\{ \frac{|\cos \frac{(2j+1) \pi}{2n}| |-1+ \lambda x|}{\cos \frac{\pi}{2n}} +\frac{|\sin \frac{(2j+1) \pi}{2n}| |\lambda y|}{\cos \frac{\pi}{2n}},\\
		                            & & \hspace{1.7 cm} |1 + \lambda z| \Big \} \\
									& \geq & |1+ \lambda z|	\\				
								    & \geq & 1 = \|(-1,0,1)\|.
\end{eqnarray*}
Also, for any $ \lambda \leq 0, $
\begin{eqnarray*}
    \|(-1,0,1)+\lambda (x,y,z)\| & = & \| (-1 +\lambda x, \lambda y, 1+ \lambda z) \| \\
		                            & = & \max \limits_{0 \leq j \leq 2n-1} \Big \{ \frac{|\cos \frac{(2j+1) \pi}{2n}| |-1+ \lambda x|}{\cos \frac{\pi}{2n}} +\frac{|\sin \frac{(2j+1) \pi}{2n}| |\lambda y|}{\cos \frac{\pi}{2n}},\\
		                            & & \hspace{1.7 cm} |1 + \lambda z| \Big \} \\
									& \geq & |-1+ \lambda x| + \tan \frac{\pi}{2n} |\lambda y|	\\			
								    & \geq & 1 = \|(-1,0,1)\|.
\end{eqnarray*}
Therefore, $ (-1,0,1) \bot_{B} (x,y,z), $ for all $ x \geq 0, z \geq 0 $ and for any $ y. $ From the homogeneity property of Birkhoff-James orthogonality, it follows that $ (-1,0,1) \bot_{B} (x,y,z), $ for all $ x \leq 0, z \leq 0 $ and for any $ y. $\\
Hence for any $ (x,y,z) \in \mathbb{X}, $ either $ (x,y,z) \in v_1^{\bot} $ or $ (x,y,z) \in v_{n+1}^{\bot}, $ i.e., $ v_1^{\bot} \cup v_{n+1}^{\bot} = \mathbb{X}. $ Therefore, $ \mathbb{X} $ does not have Property $ P_2. $ This completes the proof of the theorem.
\end{proof}
\begin{remark}
Theorem \ref{th:prism} shows that the space $\mathbb{X} \oplus_{\infty} \mathbb{Y} $ may not have Property $ P_n $ even if one of space has Property $ P_n. $
\end{remark}

Our Previous results point to the fact that there is a large family of three-dimensional polyhedral Banach spaces not having Property $ P_2. $  The concerned spaces have been constructed by taking $ \ell_{\infty} $ sum  of two-dimensional polyhedral Banach spaces with $ \mathbb{R}. $ In the next theorem, we give another such example of a three-dimensional Polyhedral Banach space, not having Property $ P_2, $ which cannot be constructed by taking $ \ell_{\infty} $ sum of lower dimensional Banach space.
 
\begin{theorem} \label{th:prism-pyramid}
Let $\mathbb{X}$  be a three-dimensional polyhedral Banach space such that $B_{\mathbb{X}}$ is a polyhedron obtained by gluing two pyramids at the opposite base faces of a right prism having square base, with vertices $ \pm(1,1,1), \pm(-1,1,1), \pm(-1,-1,1),$ $ \pm(1,-1,1), \pm(0,0,2)$. Then $ \mathbb{X} $ does not have Property $ P_2. $
\end{theorem}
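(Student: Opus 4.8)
The plan is to disprove Property $P_2$ directly, by producing two unit vectors whose Birkhoff--James orthogonality sets already exhaust $\mathbb{X}$. The natural candidates are the midpoints of two of the four vertical edges of the square prism, namely $p=(1,1,0)$ and $q=(1,-1,0)$. I would show that $p^{\bot}\supseteq\{(x,y,z)\in\mathbb{X}: xy\le 0\}$ and $q^{\bot}\supseteq\{(x,y,z)\in\mathbb{X}: xy\ge 0\}$. Since every vector of $\mathbb{X}$ satisfies $xy\le 0$ or $xy\ge 0$, this immediately gives $p^{\bot}\cup q^{\bot}=\mathbb{X}$, so $\mathbb{X}$ fails $P_2$. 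Note that only these two one-sided inclusions are needed; I never have to identify $p^{\bot}$ and $q^{\bot}$ exactly.

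The only information about the norm that the argument requires is that the two pairs of rectangular side faces of the prism remain facets of $B_{\mathbb{X}}$, with supporting functionals $f(x,y,z)=x$ and $g(x,y,z)=y$. Because every extreme point of $B_{\mathbb{X}}$ has $|x|,|y|\le 1$, both $f$ and $g$ have dual norm one, and hence $\|(x,y,z)\|\ge\max\{|x|,|y|\}$ for all $(x,y,z)$. The same observation shows $\|p\|=\|q\|=1$: each is a convex combination of the two extreme points lying on its edge (so its norm is at most $1$), while $f(p)=g(q)=1$ forces the norm to be at least $1$. If desired, one can record the full norm $\|(x,y,z)\|=\max\{|x|,\,|y|,\,\tfrac12(|x|+|z|),\,\tfrac12(|y|+|z|)\}$ by listing all twelve facets, but this is not needed.

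The core of the proof is a short estimate in the spirit of the computations in the proof of Theorem~\ref{th:prism}. Fix $w=(x,y,z)$ with $xy\le 0$ and $\lambda\in\mathbb{R}$. Using the lower bound for the norm above, $\|p+\lambda w\|\ge\max\{\,|1+\lambda x|,\ |1+\lambda y|\,\}$. If $x\ge 0\ge y$, then $|1+\lambda x|\ge 1$ whenever $\lambda\ge 0$ and $|1+\lambda y|\ge 1$ whenever $\lambda\le 0$, so the right-hand side is at least $1$ for every $\lambda$; the case $x\le 0\le y$ is symmetric, and if $x=0$ or $y=0$ the corresponding term equals $1$ identically. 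Hence $\|p+\lambda w\|\ge 1=\|p\|$ for all $\lambda$, i.e. $p\bot_B w$. Replacing $g$ by $-g$ runs the identical argument for $q=(1,-1,0)$ and gives $q\bot_B w$ for every $w$ with $xy\ge 0$.

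The one genuine step here is conceptual rather than computational: recognising that the edge midpoints $(1,\pm1,0)$ are the right vectors, and in particular that their orthogonality to $w$ is governed solely by the sign of $xy$ and is entirely insensitive to the $z$-coordinate. Once this is seen, the verification reduces to the elementary sign analysis above, and the pyramid faces play no role at all (the $z$-term in the norm can only increase $\|p+\lambda w\|$, which is harmless). For completeness one could further pin down $p^{\bot}=\{xy\le 0\}$ and $q^{\bot}=\{xy\ge 0\}$ exactly, using that $p$ lies in the relative interior of the edge $\{x=1\}\cap\{y=1\}\cap S_{\mathbb{X}}$, so that by the face-duality between $B_{\mathbb{X}}$ and $B_{\mathbb{X}^{*}}$ its norming functionals are precisely $\{\lambda f+(1-\lambda)g:\lambda\in[0,1]\}$ and James's theorem \cite{J} applies; but this refinement is not required for the theorem.
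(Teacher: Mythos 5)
Your proposal is correct and is essentially the paper's own argument: the paper chooses the vertices $(1,1,1)$ and $(1,-1,1)$ where you choose the edge midpoints $(1,1,0)$ and $(1,-1,0)$, but since neither argument ever uses the $z$-coordinate of the chosen vectors in the estimates, this difference is cosmetic. In both cases the covering $\mathbb{X}=\{(x,y,z):xy\le 0\}\cup\{(x,y,z):xy\ge 0\}$ is verified by the same sign analysis in $\lambda$, resting on the lower bound $\|(x,y,z)\|\ge\max\{|x|,|y|\}$, which the paper reads off its explicit norm formula and you obtain slightly more economically from the dual norm of the two coordinate functionals.
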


\begin{proof}
Let the vertices of $B_{\mathbb{X}}$ be $v_{\pm j}, j\in \{1,2,3,4\} $ and $w_{\pm 1},$ where $v_{\pm 1}=(1,1,\pm 1)$, $v_{\pm 2}=(-1,1,\pm 1)$, $v_{\pm 3}=(-1,-1,\pm 1)$, $v_{\pm 4}=(1,-1,\pm 1)$ and $w_{\pm 1}=(0,0,\pm 2)$. The unit sphere $S_{\mathbb{X}}$ is shown in Figure $3.$
\medskip

\begin{figure}[ht]
\centering 
\includegraphics[width=0.3\linewidth]{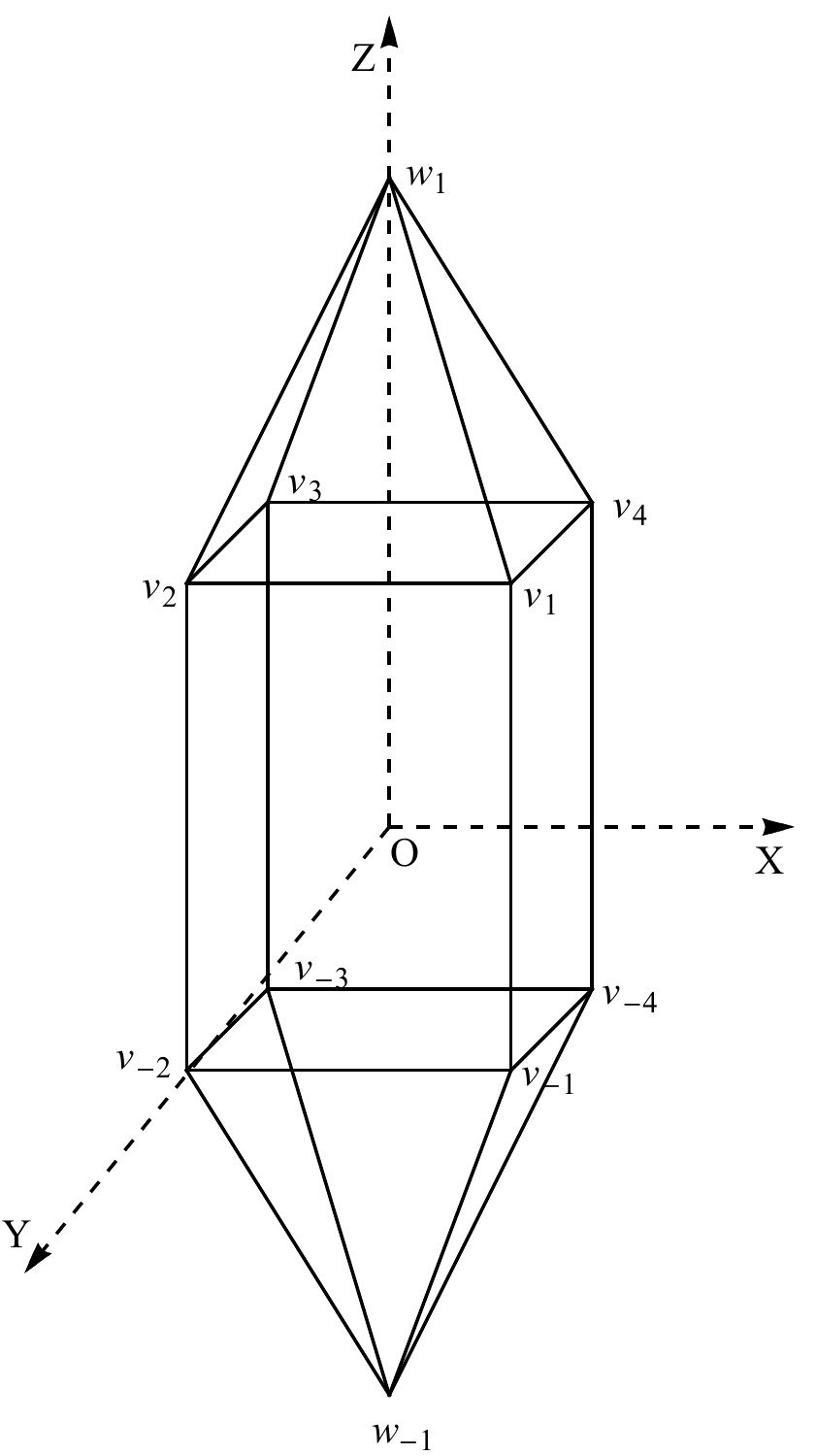}
\caption{}
\label{figure}
\end{figure}

Given any $ (x,y,z) \in \mathbb{X}, $ the expression for the norm function on $ \mathbb{X} $ turns out to be the following:
 \[ \|(x,y,z)\| = \max \left \{ |x|, |y|, \frac{|x|}{2} + \frac{|z|}{2}, \frac{|y|}{2}+ \frac{|z|}{2} \right \}. \]
We claim that, $ v_{1}^{\bot} \cup v_{4}^{\bot} = \mathbb{X}. $ Let $ (x,y,z) \in \mathbb{X} $ be such that $ x \geq 0, y \geq 0. $ Now, for any $ \lambda \geq 0, $
\begin{eqnarray*}
    \|(1,-1,1)+\lambda (x,y,z)\| & = & \| (1 +\lambda x, -1+\lambda y, 1+ \lambda z) \| \\
		                            & = & \max \Big\{  |1+ \lambda x|, |-1+\lambda y|, \frac{|1+ \lambda x|}{2}+\frac{|1 + \lambda z|}{2},\\
		                            & &  \hspace{.9 cm}\frac{|-1+ \lambda y|}{2}+\frac{|1 + \lambda z|}{2} \Big \} \\
																& \geq & |1+ \lambda x|	\\			
										           & \geq & 1 = \|(1,-1,1)\|.
\end{eqnarray*}
 Also, for any $ \lambda \leq 0, $
\begin{eqnarray*}
    \|(1,-1,1)+\lambda (x,y,z)\| & = & \| (1 +\lambda x, -1+\lambda y, 1+ \lambda z) \| \\
		                            & = & \max \Big \{  |1+ \lambda x|, |-1+\lambda y|, \frac{|1+ \lambda x|}{2}+\frac{|1 + \lambda z|}{2},\\
		                            & & \hspace{.9 cm} \frac{|-1+ \lambda y|}{2}+\frac{|1 + \lambda z|}{2} \Big\} \\
																& \geq & |-1+ \lambda y|	\\			
										           & \geq & 1 = \|(1,-1,1)\|.
\end{eqnarray*}
Therefore, $ (1,-1,1) \bot_{B} (x,y,z), $ for all $ x \geq 0, y \geq 0 $ and for any $ z. $ From the homogeneity property of Birkhoff-James orthogonality, it follows that  $ (1,-1,1) \bot_{B} (x,y,z), $ for all $ x \leq 0, y \leq 0 $ and for any $ z. $\\
 Let $ (x,y,z) \in \mathbb{X} $ be such that $ x \geq 0, y \leq 0. $ For any $ \lambda \geq 0, $
\begin{eqnarray*}
    \|(1,1,1)+\lambda (x,y,z)\| & = & \| (1 +\lambda x, 1+\lambda y, 1+ \lambda z) \| \\
		                            & = & \max \Big \{  |1+ \lambda x|, |1+\lambda y|, \frac{|1+ \lambda x|}{2}+\frac{|1 + \lambda z|}{2}, \\
		                            & & \hspace{.9 cm}\frac{|1+ \lambda y|}{2}+\frac{|1 + \lambda z|}{2} \Big\} \\
									& \geq & |1+ \lambda x|	\\			
								    & \geq & 1 = \|(1,1,1)\|.
\end{eqnarray*}
 Also, for any  $ \lambda \leq 0, $
\begin{eqnarray*}
    \|(1,1,1)+\lambda (x,y,z)\| & = & \| (1 +\lambda x, 1+\lambda y, 1+ \lambda z) \| \\
		                            & = & \max \Big\{  |1+ \lambda x|, |1+\lambda y|, \frac{|1+ \lambda x|}{2}+\frac{|1 + \lambda z|}{2}, \\
		                            & & \hspace{.9 cm}\frac{|1+ \lambda y|}{2}+\frac{|1 + \lambda z|}{2} \Big\} \\
										& \geq & |1+ \lambda y|	\\			
										  & \geq & 1 = \|(1,1,1)\|.
\end{eqnarray*}
Therefore, $ (1,1,1) \bot_{B} (x,y,z), $ for all $ x \geq 0, y \leq 0 $ and for any $ z. $ From the homogeneity property of Birkhoff-James orthogonality, it follows that $ (1,1,1) \bot_{B} (x,y,z), $ for all $ x \leq 0, y \geq 0 $ and for any $ z. $\\
Hence for any $ (x,y,z) \in \mathbb{X}, $ either $ (x,y,z) \in v_{1}^{\bot} $ or $ (x,y,z) \in v_{4}^{\bot}, $ i.e., $ v_{1}^{\bot} \cup v_{4}^{\bot} = \mathbb{X}. $ Therefore, $ \mathbb{X} $ does not have Property $ P_2. $ This completes the proof of the theorem.
\end{proof}

We next give an example of a three-dimensional polyhedral Banach space which has Property $ P_2 $ but does not have Property $ P_3. $ 
\begin{theorem} \label{th:prism-pyramid-general}
Let $\mathbb{X}$ be a three-dimensional polyhedral Banach space such that $B_{\mathbb{X}}$ is a polyhedron with vertices $(\cos\frac{j\pi}{n}, \sin\frac{j\pi}{n},\pm{1}), (0,0,\pm 2)$, $ j \in \{0,1,2,\ldots,2n-1\}$, $ n\geq 3$. Then $ \mathbb{X} $ has Property $ P_2 $ but $ \mathbb{X} $ does not have Property $ P_3. $
\end{theorem}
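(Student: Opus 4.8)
The plan is to first compute the norm and translate Birkhoff--James orthogonality into a sign condition on the supporting functionals of the facets of $B_{\mathbb{X}}$, and then to treat the two assertions separately. I would determine the $6n$ facets of $B_{\mathbb{X}}$ (the $2n$ side faces of the prism and the $2n$ triangular faces of each pyramid) together with their supporting functionals; writing $t=\tan\frac{\pi}{2n}$,
\[ f_j(x,y,z)=\frac{x\cos\frac{(2j+1)\pi}{2n}+y\sin\frac{(2j+1)\pi}{2n}}{\cos\frac{\pi}{2n}},\qquad g_j^{\pm}(x,y,z)=\tfrac12\big(f_j(x,y,z)\pm z\big)\quad(0\le j\le 2n-1), \]
a routine computation gives
\[ \|(x,y,z)\|=\max\Big\{\rho(x,y),\ \tfrac{\rho(x,y)+|z|}{2}\Big\},\qquad \rho(x,y)=\max_{0\le j\le 2n-1}f_j, \]
where $\rho$ is the Minkowski functional of the regular $2n$-gon in the $xy$-plane. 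By the functional description of Birkhoff--James orthogonality (Theorem $2.1$ of \cite{J}), a unit vector $u$ whose norm is attained exactly by the facet normals in a set $S_u$ satisfies $u\not\bot_{B}v$ precisely when all members of $S_u$ are of one strict sign at $v$. The vertices of $B_{\mathbb{X}}$ and their sets are the apices $(0,0,\pm2)$, with $S=\{g_j^{+}\}_j$ and $\{g_j^{-}\}_j$, and the polygon vertices $v_k^{\pm}=(\cos\frac{k\pi}{n},\sin\frac{k\pi}{n},\pm1)$, with $S=\{f_{k-1},f_k,g_{k-1}^{\pm},g_k^{\pm}\}$; in particular $(0,0,2)^{\bot}=\{(x,y,z):|z|\le\rho(x,y)\}$.

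To show that $\mathbb{X}$ fails Property $P_3$, I would exhibit the three unit vectors $(0,0,2)$, $(1,0,1)$, $(-1,0,1)$ and prove that their orthogonality sets cover $\mathbb{X}$. Every $(x,y,z)$ with $|z|\le\rho(x,y)$ lies in $(0,0,2)^{\bot}$, so it remains to show that $|z|>\rho(x,y)$ forces $(x,y,z)\in(1,0,1)^{\bot}\cup(-1,0,1)^{\bot}$. At $(1,0,1)$ the relevant equatorial functionals are $f_0=x+ty$ and $f_{2n-1}=x-ty$, and at $(-1,0,1)$ they are $f_{n-1}=-x+ty$ and $f_n=-x-ty$. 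If a point avoided both sets, each of the two quadruples would be of one strict sign; the two sign patterns that are mutually contradictory can be discarded, and the surviving patterns force $|y|t-x<z<x-|y|t$ (or its negative), whence $|z|<x-|y|t\le\rho(x,y)$, a contradiction. This establishes the covering, so $\mathbb{X}$ does not have Property $P_3$.

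For Property $P_2$ I would first reduce to extreme points, exactly in the spirit of the proof of Theorem \ref{th:P_(n-1)-property}: if $x$ lies in the relative interior of a proper face of $B_{\mathbb{X}}$, then its norming functionals form a subset of those of any vertex $v$ of that face, so $x^{\bot}\subseteq v^{\bot}$; hence if two unit vectors satisfied $x_1^{\bot}\cup x_2^{\bot}=\mathbb{X}$, two vertices would as well. Thus it suffices to prove that no pair of vertices covers $\mathbb{X}$. By the sign criterion, a pair $(u_1,u_2)$ with normal sets $S_1,S_2$ fails to cover $\mathbb{X}$ iff some $v$ makes $S_1$ of one strict sign and $S_2$ of one strict sign, which by Gordan's alternative happens iff $0\notin\operatorname{conv}(S_1\cup S_2)$ or $0\notin\operatorname{conv}(S_1\cup(-S_2))$. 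I would then run through the pairs up to the symmetries of $B_{\mathbb{X}}$, using that each $f_j$ sits in the plane $z=0$ at angle $\theta_j=\frac{(2j+1)\pi}{2n}$ while $g_j^{\pm}$ sits at the same angle $\theta_j$ but height $\pm\tfrac12$. For every pair other than the near-antipodal polygon pairs, one of the two hulls misses the origin for an elementary reason — either all the heights present are of one sign, or the $2n$-gon directions $\theta_j$ that occur span an arc of length at most $\frac{2\pi}{n}<\pi$ and hence lie in an open half-plane — and this disposes of the apex--apex, apex--polygon and non-opposite polygon--polygon pairs.

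The delicate and decisive case will be the near-antipodal polygon pairs $v_0^{\pm}$ and $v_m^{\pm}$ with $m\in\{n-1,n,n+1\}$, for which the aligned hull $\operatorname{conv}(S_1\cup S_2)$ does contain the origin, so everything rests on the reflected hull. Using $-f_i=f_{i+n}$ and $-g_i^{\pm}=g_{i+n}^{\mp}$, passing from $S_2$ to $-S_2$ rotates the active angles by $\pi$, and a short computation will show that for these values of $m$ all angles occurring in $S_1\cup(-S_2)$ lie in an arc of length at most $\frac{2\pi}{n}$; since $n\ge 3$ this is $<\pi$, the set lies in an open half-plane, and $0$ is excluded. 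This is exactly where the hypothesis $n\ge 3$ is essential and is the main obstacle of the argument: for $n=2$ the arc degenerates to length $\pi$, the separation collapses, and indeed Theorem \ref{th:prism-pyramid} shows the square-based double pyramid fails even Property $P_2$. Thus the crux is the careful bookkeeping of the facet-normal angles for near-opposite vertices and the verification that the relevant arc stays strictly below $\pi$, which yields Property $P_2$ and completes the proof.
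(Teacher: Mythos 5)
Your proposal is correct, and its skeleton coincides with the paper's up to the point where the real work happens: you derive the same norm formula, use James' characterization together with the fact that the norming functionals at a vertex are the convex hull of the supporting functionals of the facets through it (Lemma $2.1$ of \cite{SPBB} in the paper), make explicit the reduction from arbitrary unit vectors to vertices (the paper leaves this implicit here, having spelled it out only in the two-dimensional Theorem \ref{th:P_(n-1)-property}), and exhibit the identical triple $(0,0,2),(1,0,1),(-1,0,1)$ for the failure of Property $P_3$; your four-case sign analysis for that covering is complete and correct. Where you genuinely diverge is the verification of Property $P_2$. The paper computes every vertex's orthogonality set $v^{\bot}$ explicitly as a union of sign-constrained regions and then asserts that inspection of these expressions yields $u^{\bot}\cup v^{\bot}\subsetneqq \mathbb{X}$ for all pairs; you instead dualize: a pair fails to cover iff $0\notin\operatorname{conv}(S_1\cup S_2)$ or $0\notin\operatorname{conv}(S_1\cup(-S_2))$, which converts the pairwise check into bookkeeping of the planar angles $\theta_j$ of the facet functionals. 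This buys an actually checkable finite case analysis (precisely the work hidden in the paper's ``it follows that''), plus a transparent localization of the hypothesis $n\ge 3$ in the arc estimate $\tfrac{2\pi}{n}<\pi$ for the near-antipodal pairs, which also explains cleanly why the $n=2$ case (Theorem \ref{th:prism-pyramid}) fails even Property $P_2$.

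Two small repairs are needed in your ``easy'' cases, both achievable with tools already in your proposal. First, for an apex paired with a polygon vertex of the same sign (say $w_{+}$ and $v_k^{+}$), neither of your two stated elementary reasons applies verbatim: the heights in $S_1\cup S_2$ are only weakly of one sign (the $f$'s have height $0$), and the angles occurring include all $2n$ directions (from $\{g_j^{+}\}_j$), so no open half-plane contains them; moreover the reflected hull $S_1\cup(-S_2)$ genuinely contains $0$ in this case, so it cannot be used instead. The correct argument combines your two reasons: in any convex representation of $0$, the height coordinate forces all weight onto the zero-height functionals $f_{k-1},f_k$, which are not antipodal, a contradiction. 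Second, for non-opposite polygon--polygon pairs the angles in the aligned hull span an arc of length up to $\pi-\tfrac{\pi}{n}$, not $\tfrac{2\pi}{n}$; only the bound $<\pi$ matters, and it does hold, but the quantitative claim as written is wrong for those pairs (the $\tfrac{2\pi}{n}$ bound is correct exactly where you need it, namely for the reflected hulls of the near-antipodal pairs). With these adjustments your argument is complete.
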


\begin{proof}
We prove the theorem by assuming that $ n $ is an odd integer. Similar calculations hold true, when $ n $ is an even integer.
\begin{figure}[ht]
\centering 
\includegraphics[width=0.5\linewidth]{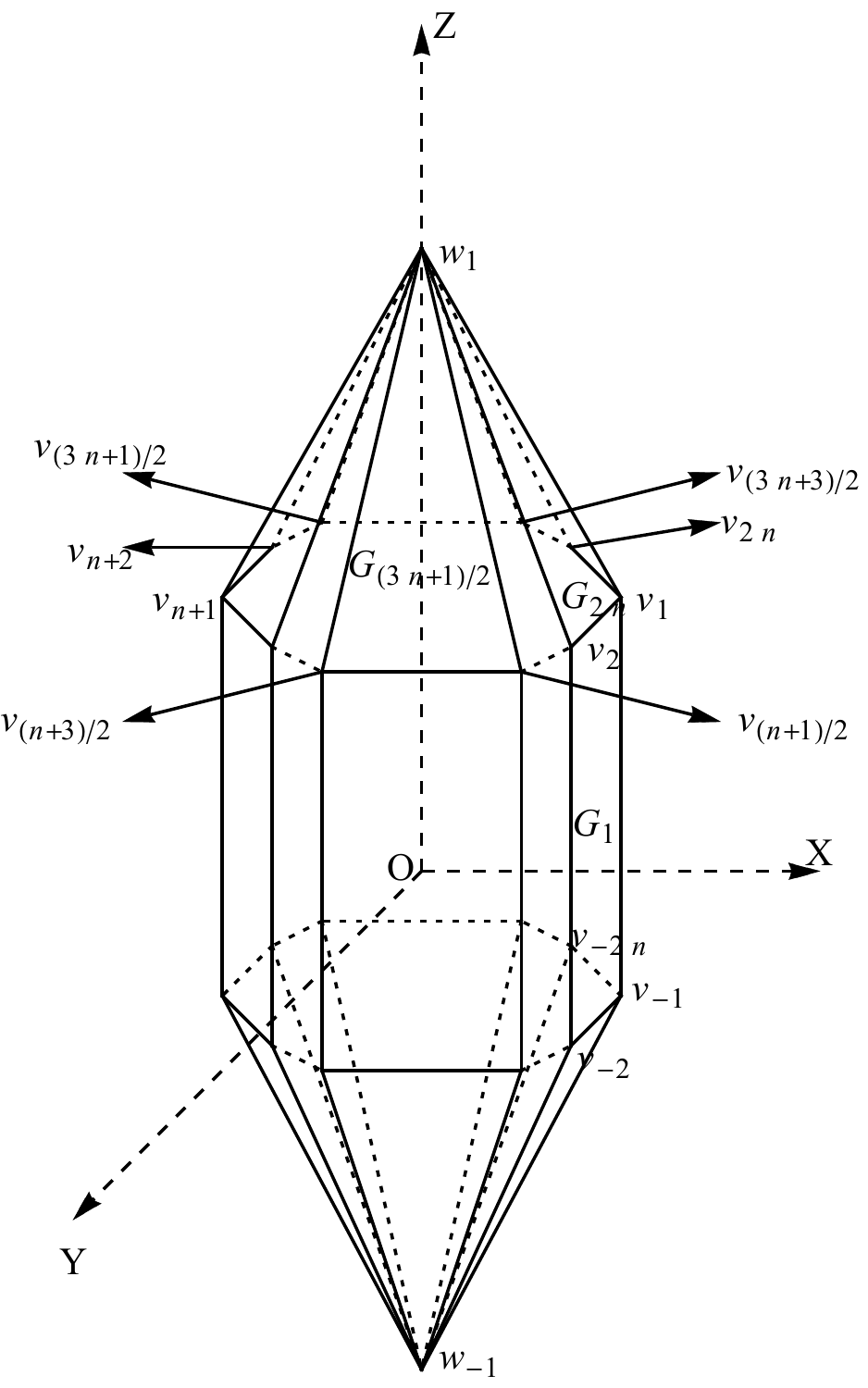}
\caption{}
\label{figure}
\end{figure}
Let the vertices of $B_{\mathbb{X}}$ be  $v_{\pm(j+1)}$, $j \in \{0,1,2,\ldots,2n-1\} $ and $ w_{\pm 1},$  where $v_{\pm(j+1)}=(\cos\frac{j\pi}{n},\sin\frac{j\pi}{n},\pm{1})$ and $w_{\pm 1}=(0,0,\pm 2)$. The unit sphere $S_{\mathbb{X}}$ is shown in Figure $4.$ Let $G_{j+1}$ denote the facet of $B_{\mathbb{X}}$ containing $v_{j+1}, v_{-j-1}, v_{j+2}, v_{-j-2}$, where $v_{2n+1}=v_1$ and $v_{-2n-1}=v_{-1}$. Let $F_{\pm (j+1)}$ denote the facet of $B_{\mathbb{X}}$ containing $v_{\pm (j+1)}, v_{\pm (j+2)},w_{\pm 1}$. For each $j\in \{0,1, 2, \ldots, 2n-1\}$, let $g_{j+1}, f_{\pm (j+1)}$ be the supporting functionals corresponding to the facets $G_{j+1}, F_{\pm (j+1)}$ respectively, i.e., $(v_{j+1}+\ker g_{j+1}) \cap S_{\mathbb{X}} = G_{j+1}$ and $(v_{\pm (j+1)}+\ker f_{\pm (j+1)}) \cap S_{\mathbb{X}} = F_{\pm (j+1)}.$\\
For every $ v_{j+1} \in S_{\mathbb{X}}, ~ j= 0, 1, 2, \ldots, 2n-1, $ there are four adjacent facets $ G_j, G_{j+1},$ $ F_j, F_{j+1} . $ Here we assume that $ G_0= G_{2n} $ and $ F_0=F_{2n}. $ Therefore by Lemma $ 2.1 $ of \cite{SPBB}, extreme supporting functionals corresponding to the vertices $ v_{j+1}, $\\
$  j=0,1,2, \ldots, 2n-1, $ are $ g_j, g_{j+1}, f_j, f_{j+1}. $ Here also we assume that $ g_{0}= g_{2n} $ and $ f_{0}=f_{2n}. $ Now consider the following subsets of $ S_{\mathbb{X}^{*}}: $\\
$ H_{j+1}= \Big\{ h \in S_{\mathbb{X}^{*}} : h = \lambda _1 g_j + \lambda_2 g_{j+1} + \lambda_3 f_j + \lambda_4 f_{j+1}, \lambda_k \geq 0 ~\forall~ k \in \{1,2,3,4\} $\\
 and $ \sum \limits_{k=1}^{4} \lambda_k =1 \Big\}, $
for each $ j=0,1,2, \ldots, 2n-1. $ Then by using Theorem $ 2.1 $ of \cite{J} and Lemma $ 2.1 $ of \cite{SPBB}, we conclude that $ v_{j+1}^{\bot} = \bigcup \limits_{h \in H_{j+1}} \ker h, $ for each $ j=0,1,2, \ldots, 2n-1. $
Again, for $ w_1, $ there are $ 2n $ adjacent facets $ F_{j+1}, ~ j = 0, 1, 2, \ldots, 2n-1. $ Therefore, as before, extreme supporting functionals corresponding to the vertex $ w_1 $ are $ f_{j+1}, ~ j = 0, 1, 2, \ldots, 2n-1. $ Now, consider the following subset of $ S_{\mathbb{X}^{*}}. $\\
$ H = \Big\{ h \in S_{\mathbb{X}^{*}} : h = \sum \limits_{k=0}^{2n-1}\lambda _{k+1} f_{k+1} , \lambda_{k+1} \geq 0 ~\forall~ k \in \{0,1,2,\ldots, 2n-1\} $ \\ and $ \sum \limits_{k=0}^{2n-1} \lambda_{k+1} =1 \Big\}. $
From this, we conclude that $ w_{1}^{\bot} = \bigcup \limits_{h \in H} \ker h. $
Now, for any $ (x,y,z) \in \mathbb{X}, $ we have, \\
 $ \|(x,y,z)\|= \max \limits_{0 \leq j \leq \frac{n-1}{2}} \Big\{ \frac{\cos \frac{(2j+1) \pi}{2n} |x|}{\cos \frac{\pi}{2n}} +\frac{\sin \frac{(2j+1) \pi}{2n} |y|}{\cos \frac{\pi}{2n}}, \frac{\cos \frac{(2j+1) \pi}{2n} |x|}{2\cos \frac{\pi}{2n}} +\frac{\sin \frac{(2j+1) \pi}{2n} |y|}{2\cos \frac{\pi}{2n}}+ \frac{|z|}{2}  \Big\}. $
Using the above expression of the norm function, we can compute the Birkhoff-James orthogonality set of the extreme points of $ B_{\mathbb{X}}. $ For the extreme points of $ B_{\mathbb{X}}, $ lying above the plane $ z = 0, $ we have, \\
 $ v_{1}^{\bot} = (1,0,1)^{\bot} = \pm \Big[ \{(x,y,z) \in \mathbb{X} : x \geq 0, y \geq 0, z \geq 0, x- \tan(\frac{\pi}{2n})y \leq 0\} \\
\cup \{(x,y,z) \in \mathbb{X} : x \leq 0, y \geq 0, z \geq 0, \frac{x}{2}+ \frac{\tan(\frac{\pi}{2n})y}{2} + \frac{z}{2} \geq 0\} \\
\cup \{(x,y,z) \in \mathbb{X} : x \leq 0, y \leq 0, z \geq 0, \frac{x}{2} - \frac{\tan(\frac{\pi}{2n})y}{2} + \frac{z}{2} \geq 0\} \\ \cup \{(x,y,z) \in \mathbb{X} : x \geq 0, y \leq 0, z \geq 0, x+\tan(\frac{\pi}{2n})y \leq 0\} \Big].$\\
Now, for each $ j \in \{1,2,\ldots, \frac{n-1}{2}\}, $ we have\\ 
$ v_{(j+1)}^{\bot}= (\cos\frac{j\pi}{n}, \sin\frac{j\pi}{n},1)^{\bot} = \pm \Big[ \{(x,y,z) \in \mathbb{X} : x \leq 0, y \geq 0, z \geq 0, $ \\ $ \frac{\cos \frac{(2j+1)\pi}{2n}}{2\cos \frac{\pi}{2n}}x + \frac{\sin \frac{(2j+1)\pi}{2n}}{2\cos \frac{\pi}{2n}}y + \frac{z}{2} \geq 0, \frac{\cos \frac{(2j-1)\pi}{2n}}{\cos \frac{\pi}{2n}}x + \frac{\sin \frac{(2j-1)\pi}{2n}}{\cos \frac{\pi}{2n}}y \leq 0\} \\
\cup \{(x,y,z) \in \mathbb{X} : x \leq 0, y \leq 0, z \geq 0, \frac{\cos \frac{(2j+1)\pi}{2n}}{2\cos \frac{\pi}{2n}}x + \frac{\sin \frac{(2j+1)\pi}{2n}}{2\cos \frac{\pi}{2n}}y + \frac{z}{2} \geq 0\} \\
\cup \{(x,y,z) \in \mathbb{X} : x \leq 0, y \leq 0, z \geq 0, \frac{\cos \frac{(2j-1)\pi}{2n}}{2\cos \frac{\pi}{2n}}x + \frac{\sin \frac{(2j-1)\pi}{2n}}{2\cos \frac{\pi}{2n}}y + \frac{z}{2} \geq 0\} \\ 
\cup \{(x,y,z) \in \mathbb{X} : x \geq 0, y \leq 0, z \geq 0, \frac{\cos \frac{(2j-1)\pi}{2n}}{2\cos \frac{\pi}{2n}}x + \frac{\sin \frac{(2j-1)\pi}{2n}}{2\cos \frac{\pi}{2n}}y + \frac{z}{2} \geq 0, $ \\ 
$ \frac{\cos \frac{(2j+1)\pi}{2n}}{\cos \frac{\pi}{2n}}x + \frac{\sin \frac{(2j+1)\pi}{2n}}{\cos \frac{\pi}{2n}}y \leq 0\} \Big].$\\
Again, for each $ j \in \{1,2,\ldots, \frac{n-1}{2}\}, $ we have\\ 
$ v_{(j +\frac{n+1}{2})}^{\bot}= (-\cos\frac{j\pi}{n}, \sin\frac{j\pi}{n},1)^{\bot} = \pm \Big[ \{(x,y,z) \in \mathbb{X} : x \geq 0, y \geq 0, z \geq 0, $ \\ $ -\frac{\cos \frac{(2j+1)\pi}{2n}}{2\cos \frac{\pi}{2n}}x + \frac{\sin \frac{(2j+1)\pi}{2n}}{2\cos \frac{\pi}{2n}}y + \frac{z}{2} \geq 0, -\frac{\cos \frac{(2j-1)\pi}{2n}}{\cos \frac{\pi}{2n}}x + \frac{\sin \frac{(2j-1)\pi}{2n}}{\cos \frac{\pi}{2n}}y \leq 0\} \\ 
\cup \{(x,y,z) \in \mathbb{X} : x \leq 0, y \leq 0, z \geq 0, -\frac{\cos \frac{(2j-1)\pi}{2n}}{2\cos \frac{\pi}{2n}}x + \frac{\sin \frac{(2j-1)\pi}{2n}}{2\cos \frac{\pi}{2n}}y + \frac{z}{2} \geq 0, $ \\ 
$ -\frac{\cos \frac{(2j+1)\pi}{2n}}{\cos \frac{\pi}{2n}}x + \frac{\sin \frac{(2j+1)\pi}{2n}}{\cos \frac{\pi}{2n}}y \leq 0\}\\
\cup \{(x,y,z) \in \mathbb{X} : x \geq 0, y \leq 0, z \geq 0, -\frac{\cos \frac{(2j+1)\pi}{2n}}{2\cos \frac{\pi}{2n}}x + \frac{\sin \frac{(2j+1)\pi}{2n}}{2\cos \frac{\pi}{2n}}y + \frac{z}{2} \geq 0\} \\
\cup \{(x,y,z) \in \mathbb{X} : x \geq 0, y \leq 0, z \geq 0, -\frac{\cos \frac{(2j-1)\pi}{2n}}{2\cos \frac{\pi}{2n}}x + \frac{\sin \frac{(2j-1)\pi}{2n}}{2\cos \frac{\pi}{2n}}y + \frac{z}{2} \geq 0\} \Big].$\\
Now, $ v_{(n+1)}^{\bot} = (-1,0,1)^{\bot} = \pm \Big[ \{(x,y,z) \in \mathbb{X} : x \geq 0, y \geq 0, z \geq 0,$ \\ 
$ -\frac{x}{2}+ \frac{\tan(\frac{\pi}{2n})y}{2} + \frac{z}{2} \geq 0 \} \\
\cup \{(x,y,z) \in \mathbb{X} : x \leq 0, y \geq 0, z \geq 0, -x- \tan(\frac{\pi}{2n})y \leq 0 \} \\
\cup \{(x,y,z) \in \mathbb{X} : x \leq 0, y \leq 0, z \geq 0, -x+\tan(\frac{\pi}{2n})y \leq 0 \} \\ 
\cup \{(x,y,z) \in \mathbb{X} : x \geq 0, y \leq 0, z \geq 0, -\frac{x}{2} - \frac{\tan(\frac{\pi}{2n})y}{2} + \frac{z}{2} \geq 0\} \Big]. $\\
Again, for each $ j \in \{1,2,\ldots, \frac{n-1}{2}\}, $ we have\\ 
$ v_{(j+n+1)}^{\bot} = (-\cos\frac{j\pi}{n}, -\sin\frac{j\pi}{n},1)^{\bot} = \pm \Big[ \{(x,y,z) \in \mathbb{X} : x \geq 0, y \geq 0, z \geq 0, $ \\ 
$ -\frac{\cos \frac{(2j+1)\pi}{2n}}{2\cos \frac{\pi}{2n}}x - \frac{\sin \frac{(2j+1)\pi}{2n}}{2\cos \frac{\pi}{2n}}y + \frac{z}{2} \geq 0\} \\
\cup \{(x,y,z) \in \mathbb{X} : x \geq 0, y \geq 0, z \geq 0, -\frac{\cos \frac{(2j-1)\pi}{2n}}{2\cos \frac{\pi}{2n}}x - \frac{\sin \frac{(2j-1)\pi}{2n}}{2\cos \frac{\pi}{2n}}y + \frac{z}{2} \geq 0\} \\
\cup \{(x,y,z) \in \mathbb{X} : x \leq 0, y \geq 0, z \geq 0, -\frac{\cos \frac{(2j-1)\pi}{2n}}{2\cos \frac{\pi}{2n}}x - \frac{\sin \frac{(2j-1)\pi}{2n}}{2\cos \frac{\pi}{2n}}y + \frac{z}{2} \geq 0, $ \\
$ -\frac{\cos \frac{(2j+1)\pi}{2n}}{\cos \frac{\pi}{2n}}x - \frac{\sin \frac{(2j+1)\pi}{2n}}{\cos \frac{\pi}{2n}}y \leq 0\} \\ 
\cup \{(x,y,z) \in \mathbb{X} : x \geq 0, y \leq 0, z \geq 0, -\frac{\cos \frac{(2j+1)\pi}{2n}}{2\cos \frac{\pi}{2n}}x - \frac{\sin \frac{(2j+1)\pi}{2n}}{2\cos \frac{\pi}{2n}}y + \frac{z}{2} \geq 0, $ \\ 
$ -\frac{\cos \frac{(2j-1)\pi}{2n}}{\cos \frac{\pi}{2n}}x - \frac{\sin \frac{(2j-1)\pi}{2n}}{\cos \frac{\pi}{2n}}y \leq 0\} \Big].$\\
Also, for each $ j \in \{1,2,\ldots, \frac{n-1}{2}\}, $ we have\\
$ v_{(j + \frac{3n+1}{2})}^{\bot} = (\cos\frac{j\pi}{n}, -\sin\frac{j\pi}{n},1)^{\bot} = \pm \Big[ \{(x,y,z) \in \mathbb{X} : x \geq 0, y \geq 0, z \geq 0, $ \\
$ \frac{\cos \frac{(2j-1)\pi}{2n}}{2\cos \frac{\pi}{2n}}x - \frac{\sin \frac{(2j-1)\pi}{2n}}{2\cos \frac{\pi}{2n}}y + \frac{z}{2} \geq 0, \frac{\cos \frac{(2j+1)\pi}{2n}}{\cos \frac{\pi}{2n}}x - \frac{\sin \frac{(2j+1)\pi}{2n}}{\cos \frac{\pi}{2n}}y \leq 0\} \\
\cup \{(x,y,z) \in \mathbb{X} : x \leq 0, y \geq 0, z \geq 0, \frac{\cos \frac{(2j+1)\pi}{2n}}{2\cos \frac{\pi}{2n}}x - \frac{\sin \frac{(2j+1)\pi}{2n}}{2\cos \frac{\pi}{2n}}y + \frac{z}{2} \geq 0\} \\
\cup \{(x,y,z) \in \mathbb{X} : x \leq 0, y \geq 0, z \geq 0, \frac{\cos \frac{(2j-1)\pi}{2n}}{2\cos \frac{\pi}{2n}}x - \frac{\sin \frac{(2j-1)\pi}{2n}}{2\cos \frac{\pi}{2n}}y + \frac{z}{2} \geq 0\} \\ 
\cup \{(x,y,z) \in \mathbb{X} : x \leq 0, y \leq 0, z \geq 0, \frac{\cos \frac{(2j+1)\pi}{2n}}{2\cos \frac{\pi}{2n}}x - \frac{\sin \frac{(2j+1)\pi}{2n}}{2\cos \frac{\pi}{2n}}y + \frac{z}{2} \geq 0, $ \\
$ \frac{\cos \frac{(2j-1)\pi}{2n}}{\cos \frac{\pi}{2n}}x - \frac{\sin \frac{(2j-1)\pi}{2n}}{\cos \frac{\pi}{2n}}y \leq 0\} \Big].$\\
Now, $ w_{1}^{\bot} = (0,0,2)^{\bot}= \pm \Big[A \cup B \cup C \cup D\Big], $ where $ A = \bigcup \limits_{j=0}^{\frac{n-1}{2}} A_j, $ 
$ A_j =  \{(x,y,z) \in \mathbb{X} : x \geq 0, y \geq 0, z \geq 0, -\frac{\cos \frac{(2j+1)\pi}{2n}}{2\cos \frac{\pi}{2n}}x - \frac{\sin \frac{(2j+1)\pi}{2n}}{2\cos \frac{\pi}{2n}}y + \frac{z}{2} \leq 0 \}, $
$ B = \bigcup \limits_{j=0}^{\frac{n-1}{2}} B_j, $ 
$ B_j = \{(x,y,z) \in \mathbb{X} : x \leq 0, y \geq 0, z \geq 0, \frac{\cos \frac{(2j+1)\pi}{2n}}{2\cos \frac{\pi}{2n}}x - \frac{\sin \frac{(2j+1)\pi}{2n}}{2\cos \frac{\pi}{2n}}y + \frac{z}{2} \leq 0 \}, $
$ C = \bigcup \limits_{j=0}^{\frac{n-1}{2}} C_j, $ 
$ C_j = \{(x,y,z) \in \mathbb{X} : x \leq 0, y \leq 0, z \geq 0, \frac{\cos \frac{(2j+1)\pi}{2n}}{2\cos \frac{\pi}{2n}}x + \frac{\sin \frac{(2j+1)\pi}{2n}}{2\cos \frac{\pi}{2n}}y + \frac{z}{2} \leq 0 \}, $
$ D = \bigcup \limits_{j=0}^{\frac{n-1}{2}} D_j, $
$ D_j =  \{(x,y,z) \in \mathbb{X} : x \geq 0, y \leq 0, z \geq 0, -\frac{\cos \frac{(2j+1)\pi}{2n}}{2\cos \frac{\pi}{2n}}x + \frac{\sin \frac{(2j+1)\pi}{2n}}{2\cos \frac{\pi}{2n}}y + \frac{z}{2} \leq 0 \}. $
From the above expressions of the Birkhoff-James orthogonality sets of extreme points of $ B_{\mathbb{X}}, $ it follows that, for any two extreme points $ u, v \in S_{\mathbb{X}}, $ $ u^{\bot} \cup v^{\bot} \subsetneqq \mathbb{X}. $ Therefore $ \mathbb{X} $ has Property $ P_2. $\\
Again, by using the same expressions, we can show that $ (1,0,1)^{\bot} \cup (-1,0,1)^{\bot} \cup (0,0,2)^{\bot} = \mathbb{X}. $ Therefore, $ \mathbb{X} $ does not have Property $ P_3. $ This completes the proof of the theorem.
\end{proof}

Finally, we give an example of a linear operator $ T $ between a three-dimensional polyhedral Banach space and a three-dimensional polyhedral Banach space having Property $P_2, $ such that $ T $ does not satisfy the B\v{S} Property. 
\begin{example} \label{example-4}
	Let $ \mathbb{X} = \ell_{\infty}^{3} $ and let $ \mathbb{Y} $ be a three-dimensional polyhedral Banach space such that $ B_{\mathbb{Y}} $ is a polyhedron with vertices $ (1,0,\pm 1), (\frac{1}{\sqrt{2}}, \frac{1}{\sqrt{2}}, \pm 1),$ \\
	$ (0,1,\pm 1), (\frac{-1}{\sqrt{2}}, \frac{1}{\sqrt{2}}, \pm 1), (-1,0,\pm 1), (\frac{-1}{\sqrt{2}}, \frac{-1}{\sqrt{2}}, \pm 1), (0,-1,\pm 1), (\frac{1}{\sqrt{2}}, \frac{-1}{\sqrt{2}}, \pm 1),$ \\ $ (0,0,\pm 2) . $ Consider a bounded linear operator $ T : \mathbb{X} \to \mathbb{Y}, $ defined by 
	$$ T (x,y,z)= \Big(\frac{x+y}{2}, \frac{y-x}{2}, y \Big) . $$
	Then it is easy to check that $ \|T\|=1, $ $M_T= \{ \pm(1,1,z), \pm(-1,1,z) : z \in [-1,1] \} $ and $ T(M_T) = \{ \pm(1,0,1), \pm(0, 1, 1) \}. $ Consider $ x_1= (1,1,1), x_2= (-1,1,1) $ and $ x_3=(-1,-1,1). $ Clearly $ \{x_1,x_2,x_3\} $ forms a basis of $ \mathbb{X}. $ If we choose $ \alpha= -10 $ and $ \beta= \frac{-3}{2}, $ then condition (c) of Corollary \ref{cor:BS-n-dim} is satisfied. From Theorem \ref{th:prism-pyramid-general}, we know that $ \mathbb{Y} $ has Property $ P_2. $ Therefore, by using Corollary \ref{cor:BS-n-dim}, we conclude that $ T $ does not satisfy the B\v{S} Property.
\end{example}

In view of the methods employed to study the B\v{S} Property of linear operators and the results obtained in the present article, it is perhaps appropriate to end it with the following remark:

\begin{remark}
We have illustrated the important role played by Property $ P_n $ in determining the B\v{S} Property of linear operators. Indeed, using this concept, we have extended the previously obtained results in \cite{SPH}. It is worth mentioning in this connection that Example \ref{example-2}, Example \ref{example-3} and Example \ref{example-4}  provided in this article are beyond the scope of the Proposition $ 2.1 $ of \cite{SPH}. We note that Property $ P_n $ is essentially a structural concept, associated especially with polyhedral Banach spaces. Therefore, it might be interesting to further study various polyhedral Banach spaces in light of the newly introduced concept of Property $ P_n. $
\end{remark}

\bibliographystyle{amsplain}

\end{document}